\newtheorem{theorem}{Theorem}
\newtheorem{proposition}[theorem]{Proposition}
\newtheorem{definition}[theorem]{Definition}
\newtheorem{lemma}[theorem]{Lemma}
\theoremstyle{remark}
\newtheorem{remark}{Remark}
\newcommand{\p}{\mathfrak{p}}
\newcolumntype{C}[1]{>{\centering\arraybackslash}p{#1}} 
\newcommand{\fp}{\mathfrak p}
\newcommand{\nup}{\nu_{\mathfrak{p}}}
\newcommand{\OK}{\mathcal{O}_K}
\newcommand{\minD}{\mathfrak{D}^{\text{min}}}
\newcommand{\Ci}{\mathcal{C}_{p^2, i}(t, d)}
\title[Towards a classification of $p^2$-Discriminant Ideal Twins]{Towards a classification of $p^2$-Discriminant Ideal Twins over Number Fields}
\date{}
\begin{document}

\author[A. Deines]{Alyson Deines}
\address{Center for Communications Research, San Diego, CA, USA}
\email{aly.deines@gmail.com}

\author[A. S. Hamakiotes]{Asimina S. Hamakiotes}
\address{University of Connecticut, Storrs, CT, USA}
\email{asimina.hamakiotes@uconn.edu}

\author[A. Iorga]{Andreea Iorga}
\address{Department of Mathematics, Cornell University, Ithaca, NY, USA}
\email{ai324@cornell.edu}

\author[C. Namoijam]{Changningphaabi Namoijam}
\address{Department of Mathematics, Colby College, Waterville, ME, USA}
\email{cnamoijam@gmail.com}

\author[M. Roy]{Manami Roy}
\address{Department of Mathematics, Lafayette College, Easton, PA, USA}
\email{royma@lafayette.edu}

\author[L. D. Watson]{Lori D. Watson}
\address{Department of Mathematics, Trinity College, Hartford, CT, USA}
\email{lori.watson@trincoll.edu}

\subjclass{Primary 11G05, 14K02, 14H10, 14H52}
\keywords{minimal discriminants, discriminant twins, isogenies}

\begin{abstract}
Isogenous elliptic curves have the same conductor but not necessarily the same minimal discriminant ideal. 
In this article, we explicitly classify all $p^2$-isogenous elliptic curves defined over a number field with the same minimal discriminant ideal for odd prime $p$ where $X_0(p^2)$ has genus 0, i.e., $p = 3$ or $5$. As a consequence, we give a list of all $p^2$-isogenous discriminant (ideal) twins over $\mathbb{Q}$ for such $p$.

\end{abstract}
\maketitle

\setcounter{tocdepth}{1}
\tableofcontents

\section{Introduction}

Isogenous elliptic curves defined over a number field have the same conductor; however, their minimal discriminant ideals might be different. 
We are interested in understanding when two (cyclic) isogenous elliptic curves also have the same minimal discriminant ideal, i.e., when they are discriminant ideal twins. 
This work continues the classification of discriminant (ideal) twins begun by the first author in  \cite{Deines2018} and continued in \cite{RNT1}.
Discriminant ideal twins serve as an obstruction to determining the optimal quotient of the modular parameterization of an elliptic curve by a modular or Shimura curve using the algorithm given by Ribet and Takahashi in \cite{Ribet1997ParametrizationsOE}. 
In~\cite{Deines2018}, the first author determines all such obstructions for elliptic curves $E$ defined over $\mathbb{Q}$ with at least one prime of multiplicative reduction, providing a list of all $j$-invariants corresponding to semistable isogenous\footnote{Throughout, we will restrict to cases of cyclic isogenies.} discriminant twins. 
This result is extended to (non-semistable) $p$-isogenous elliptic curves defined over arbitrary number fields for $p \in \{3, 5, 7, 13\}$ by Barrios,  Brucal-Hallara, Deines, Harris, and Roy in \cite{RNT1}. 
With a goal of completing the classification for other isogeny degrees we further extend this work by determining $p^2$-isogenous discriminant ideal twins over arbitrary number fields in the case where $p$ is an odd prime and $X_0(p^2)$ has genus $0$, i.e., $p^2 \in \{9, 25\}$.

If $E_1$ and $E_2$ are (cyclic) isogenous discriminant ideal twins, then we can create infinitely many more by twisting at primes $\ell$ coprime to $2$ and the conductor of the curves.
Note that twisting preserves $j$-invariants and so, even though twisting by such $\ell$ creates infinitely many pairs of isogenous elliptic curves with the same discriminant ideals, it does not create new twist families.

It is easy to check the Cremona database (via SageMath \cite{sagemath} or LMFDB \cite{lmfdb}) and find all twist families of isogenous discriminant ideal twins over $\mathbb{Q}$.  For each prime isogeny degree, all the $j$-invariant pairs found in the Cremona database are given in  \cite[Table 5.1]{RNT1}.
For each $p^2$-isogeny degree, all the $j$-invariant pairs found in the Cremona database are given in Table~\ref{QQdisctwins}.

\begin{table}
\caption{$j$-invariant pairs for $p^2$-isogenous elliptic curves}
\label{QQdisctwins}
$$\begin{array}{rcc}\hline
    \text{deg} & j_1 & j_2 \\ \hline
 4 & -1/15 & 56667352321/15\\
   &  35937/17 & 82483294977/17\\
   &  287496 & 287496\\ \hline
    9 & -50357871050752/19 & 32768/19\\
  &  4096000/37 & 727057727488000/37\\ \hline
    25 & -52893159101157376/11 & -4096/11\\
  &  190705121216/71 & 3922540634246430781376/71\\ \hline
\end{array}$$
\end{table}

The next question to ask is, are there more $j$-invariant pairs of $p$ or $p^2$-isogenous discriminant ideal twins? 
The Cremona tables in LMFDB contain curves with conductor up to 299,996,953 and are complete for curves with conductor up to 500,000.  
With more data, would we find more discriminant ideal twins?
For isogeny degrees $p = 2, 3, 5, 7, 13$ over $\mathbb{Q}$, the answer is no.
This was first examined by the first author~\cite{Deines2018} for elliptic curves with at least one prime with multiplicative reduction, and the more general prime isogeny case for  elliptic curves defined over number fields was completed by Barrios, Brucal-Hallara, Deines, Harris, and Roy~\cite{RNT1}.
Similarly, the first author \cite{Deines2018} also examined the cases $p^2 = 4, 9, 25$ for curves with at least one prime of multiplicative reduction. 

In this paper, we continue this exploration and finish certain cases over $\mathbb{Q}$ for curves without multiplicative reduction; we also work with curves with both multiplicative and non-multiplicative reduction over number fields, in general. 

As in previous work, all isogenies are cyclic.
Specifically, we examine when isogenous discriminant ideal twins defined over a number field $K$ occur for $p^2$-isogeny degrees coming from genus 0 modular curves for odd $p$, i.e.,  for isogeny degrees $n = 9, 25$. In \cite{Bariso}, Barrios gives a parameterization of $p^2$-isogenous elliptic curves with the property that their $j$-invariants are not both $0$ and $1728$. In particular, for $p = 3, 5$, if $E_1$ and $E_3$ are two $p^2$-isogenous elliptic curves then there exist $t\in K$, $d \in K^\times/K^{\times2}$ such that $E_i \cong \mathcal{C}_{p^2, i}(t, d)$ (see Table~\ref{ta:curves} for precise definition of $\mathcal{C}_{p^2, i}(t, d)$). Using this notation, we state our main results: 
\begin{theorem}\label{n=9_iff}
Let $E_1$ and $E_3$ be two $9$-isogenous elliptic curves over a number field $K$ such that their $j$-invariants are not equal.
Suppose further that $E_i \cong \mathcal{C}_{9, i}(t, d)$, where $t\in \mathcal{O}_K$ and $d\in \mathcal{O}_K$ given in Table~\ref{ta:curves}. Then $E_1$ and $E_3$ are discriminant ideal twins if and only if for each prime $\fp$ of $\mathcal{O}_K$,
    \begin{equation}\label{eq:n=9}
        \nup(t-3) = 3k_\fp, \quad \text{ for } 0 \leq k_\fp \leq \nup(3). 
    \end{equation}
    Moreover, the two curves are discriminant twins if and only if $t$ satisfies \eqref{eq:n=9} and 
    $(t-3)^8\in\mathcal{O}_{K}^{12}$.
\end{theorem}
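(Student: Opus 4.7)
The plan is to work prime by prime, comparing the $\mathfrak{p}$-adic valuations of the minimal discriminants of $E_1$ and $E_3$ at every prime $\mathfrak{p}$ of $\mathcal{O}_K$. Using Barrios's parameterization from Table~\ref{ta:curves}, I would first write down the discriminants $\Delta_1, \Delta_3$ and the invariants $c_4^{(i)}, c_6^{(i)}$ of the given Weierstrass models $\mathcal{C}_{9,1}(t,d)$ and $\mathcal{C}_{9,3}(t,d)$ as explicit polynomials in $t$ and $d$. The guiding observation is that the ratio $\Delta_1/\Delta_3$ should simplify to a controlled power of $(t-3)$ up to units and twelfth powers; this already isolates $(t-3)$ as the only obstruction to the two minimal discriminant ideals matching up.

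The prime analysis then splits into two regimes. At primes $\mathfrak{p}\nmid 3$, after the standard twist by $d$ the models $\mathcal{C}_{9,i}(t,d)$ are already minimal and the explicit formulas force $\nu_\mathfrak{p}(\minDo)=\nu_\mathfrak{p}(\minDt)$ without any condition on $t$. The real content is at primes $\mathfrak{p}\mid 3$. There I would apply the Tate minimization criterion: a model fails to be minimal at $\mathfrak{p}$ precisely when $\nu_\mathfrak{p}(c_4)\geq 4$, $\nu_\mathfrak{p}(c_6)\geq 6$, and $\nu_\mathfrak{p}(\Delta)\geq 12$, in which case the substitution $(x,y)\mapsto(\pi_\mathfrak{p}^2 x,\pi_\mathfrak{p}^3 y)$ decreases $\nu_\mathfrak{p}(\Delta)$ by $12$. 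The aim is to compute, for each of $i=1,3$, how many such minimization steps are available for $\mathcal{C}_{9,i}(t,d)$ in terms of $\nu_\mathfrak{p}(t-3)$ and $\nu_\mathfrak{p}(3)$, and to show that simultaneous matching of the resulting valuations forces $\nu_\mathfrak{p}(t-3)=3k_\mathfrak{p}$ with $0\leq k_\mathfrak{p}\leq \nu_\mathfrak{p}(3)$, and conversely that this condition suffices.

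For the \emph{moreover} clause, the equality of minimal discriminant \emph{ideals} upgrades to equality of the chosen generators exactly when the ratio of the two minimal discriminants lies in $(\mathcal{O}_K^\times)^{12}$. Feeding the explicit formula for $\Delta_1/\Delta_3$ into this condition reduces it to the stated global constraint $(t-3)^8\in \mathcal{O}_K^{12}$.

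The main obstacle I expect is the prime-by-prime computation at primes above $3$. Because $E_1$ and $E_3$ sit asymmetrically in the $9$-isogeny chain, the valuations of $c_4$, $c_6$, and $\Delta$ behave differently for the two parameterized models, so the admissibility of each minimization step has to be tracked independently for $\mathcal{C}_{9,1}(t,d)$ and $\mathcal{C}_{9,3}(t,d)$ and then reconciled. Verifying that the upper bound $k_\mathfrak{p}\leq \nu_\mathfrak{p}(3)$ is sharp --- equivalently, that any higher $\mathfrak{p}$-adic valuation of $t-3$ forces the two minimal discriminant valuations apart --- is the most delicate step and will likely require splitting into several sub-cases depending on the exact residue of $\nu_\mathfrak{p}(t-3)$ modulo $3$ and its size relative to $\nu_\mathfrak{p}(3)$.
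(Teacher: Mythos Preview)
Your overall plan aligns with the paper's: work prime by prime, exploit that (after clearing a power of $3$ from $E_3$) the ratio $\Delta_1/\Delta_3$ is exactly $(t-3)^8$, and then compare how many minimization steps each model admits via a signature analysis. The paper carries this out as Theorems~\ref{n=9_if} and~\ref{n=9_onlyif}, with the case analysis at primes above $3$ recorded in Table~\ref{Casesn=9}.

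There is, however, a genuine error in your treatment of primes $\fp\nmid 3$. You assert that there the explicit formulas force $\nup(\mathfrak{D}_1^{\min})=\nup(\mathfrak{D}_3^{\min})$ \emph{without any condition on $t$}. This is false. If $\nup(t-3)=s>0$ at such a prime, then all of $T_2,\dots,T_8$ are $\fp$-units (their values at $t=3$ are powers of $3$), so both models have $\nup(c_{4,i})=4\nup(2)+2\delta_\fp$ and $\nup(c_{6,i})=6\nup(2)+3\delta_\fp$ with $\delta_\fp\in\{0,1\}$; by Papadopoulos's tables both are already $\fp$-minimal, yet
\[
\nup(\Delta_{1})-\nup(\Delta_{3})=8s\neq 0,
\]
so the curves are \emph{not} discriminant ideal twins. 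Condition~\eqref{eq:n=9} at $\fp\nmid 3$ reads $0\le k_\fp\le \nup(3)=0$, i.e.\ $\nup(t-3)=0$; this is a real constraint, not a vacuous one, and the necessity direction must establish it. The paper does exactly this in the ``$\ell=0$'' paragraph of the proof of Theorem~\ref{n=9_if}.

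Two smaller points. First, the minimization criterion you quote --- non-minimal iff $\nup(c_4)\ge 4$, $\nup(c_6)\ge 6$, $\nup(\Delta)\ge 12$ --- holds only in residue characteristic $\ge 5$; at primes above $3$, which is where the content lies, you must use Papadopoulos's Tableau~III, as the paper does. Second, for the discriminant-\emph{twin} upgrade, having $(t-3)^8\in\mathcal{O}_{K_\fp}^{12}$ for every $\fp$ does not immediately give $(t-3)^8\in\mathcal{O}_K^{12}$; the paper closes this local-to-global gap with the Grunwald--Wang theorem, a step your sketch omits.
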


\begin{theorem}\label{n=25_iff}
    Let $E_1$ and $E_3$ be two $25$-isogenous elliptic curves over a number field $K$ such that their $j$-invariants are not equal. Suppose further that  $E_i \cong \mathcal{C}_{25, i}(t, d)$, where $t\in \mathcal{O}_K$ and $d\in \mathcal{O}_K$ given in Table~\ref{ta:curves}. Then $E_1$ and $E_3$ are discriminant ideal twins if and only if for each prime $\p$ of $\mathcal{O}_K$,
    \begin{equation}\label{eq0:n=25}
        \nup(t-1) = k_\p, \quad \text{ for } 0 \leq k_\p \leq \nup(5).
    \end{equation}
   In fact, the two curves are discriminant twins  if $t$ satisfies \eqref{eq0:n=25}.
\end{theorem}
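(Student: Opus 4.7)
The plan is to reduce the statement to a local computation at each prime $\p$ of $\mathcal{O}_K$ and then to carry out an explicit minimization at the primes above $5$. Starting from the Weierstrass equations for $\mathcal{C}_{25, 1}(t, d)$ and $\mathcal{C}_{25, 3}(t, d)$ given in Table~\ref{ta:curves}, I would first compute the discriminants $\Delta_1$ and $\Delta_3$ of these (typically non-minimal) models as polynomials in $t$ and $d$ and identify the precise factor by which they differ, which by analogy with the $n=9$ case should be a simple expression involving $(t-1)$ and a power of $5$. Since being a discriminant ideal twin is a local property, the task reduces to showing $\nu_\p(\Delta_1^{\min}) = \nu_\p(\Delta_3^{\min})$ for every prime $\p$ of $\mathcal{O}_K$.

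For primes $\p$ that do not lie above $5$, I would argue that minimization proceeds in parallel on both sides of the isogeny: any factor of a uniformizer $\pi$ of $\p$ that can be scaled away from $\Delta_1$ via an admissible substitution can be scaled away from $\Delta_3$ by the same substitution, and vice versa. Thus the $\p$-adic valuations of the minimal discriminants agree unconditionally at such primes, and the problem reduces to primes above $5$.

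At primes $\p \mid 5$, I would minimize $\mathcal{C}_{25,1}(t,d)$ and $\mathcal{C}_{25,3}(t,d)$ separately and express each $\nu_\p(\Delta_i^{\min})$ as an explicit piecewise-linear function of $\nu_\p(t-1)$ and $\nu_\p(5)$, tracking how many powers of $\pi$ can be removed by an admissible change of variables. A careful case analysis should establish that the two valuations coincide exactly when $\nu_\p(t-1) = k_\p$ for some integer $0 \le k_\p \le \nu_\p(5)$, and that outside this window one curve admits further reduction that the other does not, forcing inequality. Combining this biconditional at each prime above $5$ with the unconditional agreement elsewhere proves the ideal twin statement.

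For the final assertion on discriminant (not merely ideal) twins, I would compare $\Delta_1^{\min}$ and $\Delta_3^{\min}$ as elements of $\mathcal{O}_K$ rather than as ideals. In contrast to the $n=9$ case, where the extra condition $(t-3)^8 \in \mathcal{O}_K^{12}$ was required, here the exponent of $(t-1)$ in $\Delta_i$ together with the local scaling factors at primes above $5$ should conspire so that the global ratio $\Delta_1^{\min}/\Delta_3^{\min}$ is automatically a unit in $\mathcal{O}_K^\times$ once \eqref{eq0:n=25} holds. I expect the main obstacle to be the careful bookkeeping at the primes dividing $5$, where one must verify that the admissible substitutions applied to each of the two curves yield matching unit prefactors so that the two minimal discriminants coincide as elements of $\mathcal{O}_K$ and not merely as ideals.
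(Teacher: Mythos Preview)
Your overall architecture---reduce to a local question, identify the ratio $\Delta_1/\Delta_3$ as a simple expression in $(t-1)$, and carry out explicit minimization---matches the paper's approach, and you correctly anticipate why the discriminant-twin conclusion needs no extra hypothesis: the ratio is $(t-1)^{24}$, which is already a twelfth power in $\mathcal{O}_K$, so the unit obstruction present in the $n=9$ case (where the ratio was $(t-3)^8$) evaporates.

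There is, however, a genuine gap in your treatment of primes $\p\nmid 5$. You assert that at such primes minimization proceeds in parallel and the minimal discriminant valuations agree \emph{unconditionally}. This is false, and it is precisely where the condition \eqref{eq0:n=25} bites when $\nup(5)=0$. If $\p\nmid 5$ but $\p\mid(t-1)$, one checks from Table~\ref{ta:invariants} that all the $S_m$ with $m\neq 3$ have $\p$-valuation zero, so $\nup(c_{4,1})=\nup(c_{4,3})$ and $\nup(c_{6,1})=\nup(c_{6,3})$, while $\nup(\Delta_1)=\nup(\Delta_3)+24\,\nup(t-1)$. Because the $c_4$ and $c_6$ valuations coincide, the admissible substitutions that yield integral models are \emph{the same} for both curves, and hence so is the scaling that achieves minimality (this is confirmed via \cite[Tableaux III, V]{Papadopoulos1993}). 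Consequently the \emph{minimal} discriminants also differ by $24\,\nup(t-1)>0$, and $E_1,E_3$ fail to be discriminant ideal twins at $\p$. In other words, the condition $k_\p\le\nup(5)$ at a prime with $\nup(5)=0$ is exactly the requirement $\nup(t-1)=0$; it is not automatic.

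So the case split should not be ``$\p\mid 5$ versus $\p\nmid 5$'' but rather a uniform analysis of $\nup(t-1)$ against $\nup(5)$ at every prime, as in Table~\ref{Casesn=25}. A secondary point: your phrasing of the discriminant-twin step in terms of a ``global ratio $\Delta_1^{\min}/\Delta_3^{\min}$'' presupposes global minimal models, which need not exist over a general number field. The paper instead argues prime by prime: at each $\p$ the local ratio $\mu_\p$ lies in $\mathcal{O}_K^{12}$, so one can twist the $\p$-minimal model of $E_1$ by a twelfth root of $\mu_\p$ to force equality of the $\p$-minimal discriminants as elements.
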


We prove Theorem~\ref{n=9_iff} (see Theorems~\ref{n=9_if}~and~\ref{n=9_onlyif}) in Section~\ref{proof_for_n=9} and Theorem~\ref{n=25_iff} (see Theorems~\ref{n=25_if}~and~\ref{n=25_onlyif}) in Section~\ref{proof_for_n=25}.
The proofs of these theorems rely on the fact that we can take $t \in \mathcal{O}_K$, not just in $K$, which is proved in Lemma~\ref{Lem:tisOKint} of Section~\ref{results_for_p^2}. 
The rest of this paper is organized as follows. 
In Section~\ref{preliminaries}, we recall preliminary definitions and results about elliptic curves.  
Section~\ref{results_for_p^2} discusses the cases where the curves have the same $j$-invariant. We treat the case when the two curves have $j$-invariants both equal to $0$ or $1728$ separately.
As a consequence of our main theorems, in Section~\ref{sec:Examples} it is confirmed that, over $\mathbb{Q}$, the Cremona database contains all the $j$-invariant pairs associated to $p^2$-isogenies, for odd $p$. 
Examples and code are available on github \cite{GitHubDIT}.

\subsection*{Acknowledgments}  
The authors thank the Banff International Research Station (BIRS) and Women in Numbers 6 (23w5175) for the opportunity to initiate this collaboration. The authors would like to thank the referee for valuable suggestions and comments, and for identifying an error in an earlier version of the paper. CN was partially supported by a Colby College Research Grant. The AMS Simons Travel Grant program partially supported MR during this work.

\section{Preliminaries}\label{preliminaries}

In this section we will recall some basic definitions and facts about elliptic curves; see Silverman~\cite{Silverman2009} and  \cite[section 2]{RNT1} for details. To start, let $K$ be a number field or a local field of characteristic $0$. An elliptic curve $E$ defined over $K$ is given by a Weierstrass model
\begin{equation}\label{eq:1}
E:y^2 + a_1 xy + a_3 y = x^3 + a_2 x^2 + a_4x + a_6,     
\end{equation}
with $a_i \in K.$  The $b_i$ and $c_i$-invariants are further defined as
$$
\begin{array}{ll}
 b_2 = a_1^2 + 4a_2, & b_4 = 2a_4 + a_1 a_3, \qquad \qquad \qquad b_6 = a_3^2 + 4a_6,  \\
 c_4 = b_2^2 - 24b_4, & c_6 = -b_2^3 + 36b_2 b_4 - 216b_6,\\
 \Delta = (c_4^3 - c_6^2)/1728, & j = c_4^3 /\Delta. \\
\end{array}
$$ 
We say $E$ is given by an 
\textit{integral Weierstrass model} if each $a_i$ is in the ring of integers of $K$ 
for $i \in \{1, 2, 3, 4, 6\}$.    
 
An elliptic curve $E^\prime$
is $K$-isomorphic to $E$ if there is an \emph{admissible change of variables} $\tau: E \rightarrow E'$ defined by $(x,y) \mapsto (u^2 x + r,u^3 y + su^2x + w)$, where $u, r, s, w \in K$, with $u \neq 0$. We write $\tau = [u, r, s, w].$ Consequently,
\[
j^{\prime} = j, \qquad \Delta^{\prime}=u^{-12}\Delta,\qquad c_{4}^{\prime}%
=u^{-4}c_{4},\qquad c_{6}^{\prime}=u^{-6}c_{6}.
\] 
If any of $u, r, s, w \not \in K$, then $E$ and $E^\prime$ are isomorphic over an algebraic closure of $K$. When this occurs, i.e., when $\tau$ is defined over an extension of $K$, we say that $E^\prime$ is a \textit{twist} of $E$.

In the case that $K$ is a number field with ring of integers $\OK$, 
let $\fp$ be a prime ideal of $K$ and $\nup$ be the normalized valuation of the completion $K_{\fp}$ of $K$ at~$\fp$. 
Further, let $R_{\fp}$ be the ring of integers of $K_{\fp}$.
By choosing an element of $\fp \setminus \fp^2$, we can find an element $\pi \in \mathcal{O}_K$ that has $\nup(\pi) = 1$ and $\pi$ is thus a uniformizer in $R_{\fp}$.

Consider $E$ defined over $K$ with Weierstrass equation as in \eqref{eq:1}.
We view $E$ as being defined over $K_{\fp}$ via the inclusion $\iota : K \hookrightarrow K_{\fp}$.
Via this inclusion we often conflate $E$ being defined over $K$ with $E$ being defined over $K_{\fp}$.
If $E$ is not already $\fp$-integral, then there is at least one coefficient $a_i$ with negative valuation.
The transformation $\tau = [u^{-1}, 0, 0, 0]$ on $E$ gives a $K_{\fp}$-isomorphic elliptic curve whose Weierstrass coefficients are $u^{i}a_i$. 
Thus, choosing $u^{-1}$ to be an appropriate power of $\pi$ we can find a model for $E$ such that all coefficients $u^ia_i$ are in $R_{\fp}$, i.e., we can always find a $\fp$-integral Weierstrass model.
As we chose $\pi$ to be an element of $\mathcal{O}_K$, the model $\tau E$ is defined over $K$ in addition to $K_{\fp}.$

A $\fp$-integral Weierstrass model as constructed above will, by construction, have an integral discriminant $\nup(\Delta) \geq 0$.
As $\nup$ is discrete, there will be a $\fp$-integral model such that $\nup(\Delta) \geq 0$ is minimal. 
This model will not be unique, but the minimal valuation of the discriminant will be.

\begin{definition}
    A Weierstrass model for $E$ defined over $K_{\fp}$ is called a \textit{$\fp$-minimal (Weierstrass) model} if $\nup(\Delta)$ is minimized subject to the constraint that the model is a $\fp$-integral Weierstrass model. Any $\Delta$ such that $\nup(\Delta)$ is minimal is called a \textit{$\fp$-minimal discriminant} of $E$, and we call $\nup(\Delta)$ the \textit{valuation of the minimal discriminant} of $E$ at $\fp$. 
\end{definition}

\begin{remark}
    While we have been working prime by prime, we note that we can always find a $\fp$-integral model that is globally integral.
    By \cite[Proposition 4.7.8]{CohenCANT}, we can always construct $\beta \in \mathcal{O}_K$ that has prescribed (non-negative) valuations at a finite set of prime ideals and non-negative valuation at all other primes.
    Using this element, we can transform $E$ to an integral model $\tau E$ that is integral and $\fp$-minimal at the prescribed set of primes.
    The caveat is that the discriminant of $\tau E$ is not necessarily minimal at primes not in the prescribed set.
\end{remark}

When working with elliptic curves over local fields, we will also consider the $c_i$-invariants along with the discriminant. The following definition will be used frequently in the later sections.
\begin{definition}
For $\fp$ a prime of $K$, we define the signature of $E$ defined over $K$ with
respect to $\fp$ to be%
\[
\text{sig}_{\fp} \left(  E\right)  =\left(  \nup\left(
c_{4}\right)  , \nup\left(  c_{6}\right)  , \nup \left(  \Delta\right)
\right).
\]
\end{definition}

Next, we look at the global properties of elliptic curves related to the valuation of the minimal discriminant of $E$ at $\fp$. Using the embedding  $\iota$, for each prime $\fp$ of $K$, we can view an elliptic curve $E$ defined over $K$ as an elliptic curve
defined over $K_{\fp}$.
Then, together with \cite[Lemma 2.6]{RNT1} and the above discussion, we have the following global definition.
\begin{definition}
Let $E$ be an elliptic curve defined over $K$. For each prime $\fp$ of $\mathcal{O}_K$ we can choose a $\fp$-minimal model given by an integral Weierstrass model that is $K$-isomorphic to $E$. This $\fp$-minimal model $E_{\fp}$ has discriminant $\Delta_\fp$, which is defined to be the $\fp$-minimal discriminant of $E$. 
    The \textit{minimal discriminant ideal} of $E$ defined over $K$, denoted by $\minD$, is the (integral) ideal of $K$ given by 
    $$\minD = \prod_{\fp} \fp^{\nup(\Delta_{\fp})}.$$
\end{definition}

Over a number field, it is not always possible to find a single Weierstrass equation that is simultaneously minimal for every prime $\fp$, as illustrated in \cite[Example 2.8]{RNT1}. When it is possible, we have a global minimal model as follows. 

\begin{definition}
    A \textit{global minimal model} for $E$ defined over $K$ is an integral Weierstrass equation 
    $$y^2 + a_1 xy + a_3 y = x^3 + a_2 x^2 + a_4 x + a_6$$
    such that the discriminant $\Delta$ of the equation satisfies $\minD = \left( \Delta \right).$
\end{definition}

The conductor of an elliptic curve is closely related to its discriminant, and measures, in some sense, the arithmetic complexity of the curve. The conductor describes the possible bad reduction types of an elliptic curve. We will skip the definition here and refer to Silverman~\cite[Section VIII.11]{Silverman2009}. Next, we define the object of study, discriminant (ideal) twins.

\begin{definition}
\label{def:disctwins}
Let $K$ be a number field, and let $E$ and $E'$ be elliptic curves defined over $K$ that are not $K$-isomorphic. We say that $E$ and $E'$ are \textit{discriminant ideal twins} if they have the same minimal discriminant ideal and the same conductor.    If, additionally, for each prime $\fp$ there exist $\fp$-minimal models for $E$ and $E'$ defined over $\OK$ such that $\Delta_{\fp} = \Delta'_{\fp}$,    then we say $E$ and $E'$ are \textit{discriminant twins.}
\end{definition}

Now that we have defined discriminant (ideal) twins, we can investigate the discriminant ideals of $n$-isogenous elliptic curves over number fields. 
Two elliptic curves $E_1$ and $E_2$ defined over the same field $K$ are \textit{isogenous} if there exists a non-constant morphism $\varphi \colon E_1 \rightarrow E_2$ with coefficients in $K$ mapping the neutral element of $E_1$ to the neutral element of $E_2$. An equivalent definition states that an isogeny is a morphism $\varphi \colon E_1 \to E_2$ that is surjective and has finite kernel, $\ker \varphi$. If $\ker\varphi\cong \mathbb{Z} /n \mathbb{Z}$ for a natural number $n$, we say $E_1$ and $E_2$ are $n$-isogenous. Furthermore, the isogeny $\varphi$ is defined over $K$ when $\ker\varphi$ is $\operatorname*{Gal}\!\left(\overline{K}/K\right)  $-invariant. 
For an $n$-isogeny   $\varphi:E_1\rightarrow E_2$ 
defined over $K$, the $K$-isomorphism class of $(E,\ker \varphi)$ is a non-cuspidal $K$-rational point on the classical modular curve $X_0(n)$. Note that the non-cuspidal $K$-rational points of $X_0(n)$ parameterize isomorphism classes of pairs $(E,C)$ where $E$ is an elliptic curve defined over $K$ and $C$ is a cyclic subgroup of $E$ of order $n$ such that $C$ is $\operatorname*{Gal}\!\left(\overline{K}/K\right)  $-invariant. 

In this article we focus on some $n$-isogenies when $X_0(n)$ has genus $0$. In particular we focus on the case when $n=p^2$, for an odd prime $p$. If $E_1 \to E_3$ is a $p^2$-isogeny, then there is an elliptic curve $E_2$ such that  $E_i \to E_{i+1}$ is a $p$-isogeny for $i = 1, 2$. Here, $E_1$, $E_2$ and $E_3$ are non-isomorphic curves. Theorem~\ref{parafamilies} gives a parameterization of such curves $E_i$. Before introducing Theorem \ref{parafamilies}, let us define some polynomials $T_i, S_i$ in Table \ref{ta:polyTi} that will be used frequently in this article. 
{
\renewcommand*{\arraystretch}{1.2} 
	\renewcommand{\arraycolsep}{0.4cm}
	\begin{longtable}{ccC{3.7in}}
	\caption{Polynomials in $t$ that appear in models for $\mathcal{C}_{p^2,i}(t,d)$}\\
		$p^2$&\text{Notation} & \text{Polynomial} \\ \hline
		\endfirsthead
	\caption{Polynomials in $t$ that appear in models for $\mathcal{C}_{p^2,i}(t,d)$}\\
	$p^2$&\text{Notation} & \text{Polynomial} \\ \hline
		\endhead
		\hline
		\multicolumn{3}{r}{\emph{continued on next page}}
		\endfoot
		\hline
		\endlastfoot
		$9$&$T_{1}$ & $t-3$ \\
		\cmidrule{2-3}
		&$T_{2}$ & $t+6$ \\
		\cmidrule{2-3}
		&$T_{3}$ & $t^2+3t+9$ \\
		\cmidrule{2-3}
		&$T_{4}$& $t^3-24$ \\
		\cmidrule{2-3}
		&$T_{5}$& $t^3+234t^2+756t+2160$ \\
			\cmidrule{2-3}
		&$T_{6}$ & $t^6-36t^3+216$ \\ 	\cmidrule{2-3}
		&$T_{7}$ & $t^6-504t^5-16632t^4-123012t^3-517104t^2-1143072t-1475496$ \\	\cmidrule{2-3}
		&$T_{8}$ & $t$\\ \cmidrule{2-3}
        &$T_9$ & $t^2 - 6t+36$ \\ \cmidrule{2-3}
        &$T_{10}$ & $t^2-6t-18$ \\ \cmidrule{2-3}
        &$T_{11}$ & $t^4+6t^3+54t^2-108t+324$ \\
  \hline

		$25$&$S_{1}$ & $t^4 + t^3 + 6t^2 + 6t + 11$ \\ \cmidrule{2-3}
		&$S_{2}$ & $t^2 + 4$ \\ \cmidrule{2-3}
		&$S_{3}$ & $t - 1$ \\ 		\cmidrule{2-3}
		&$S_{4}$ & $\makecell{t^{10} + 240t^9 + 2170t^8 + 8880t^7 + 34835t^6 +  83748t^5 \\+ 206210t^4 + 313380t^3 + 503545t^2 + 424740t + 375376}$ \\ \cmidrule{2-3}
		&$S_{5}$ & $t^4 + 6t^3 + 21t^2 + 36t + 61$ \\ 		\cmidrule{2-3}
		&$S_{6}$ & $\makecell{t^{10} - 510t^9 - 13580t^8 - 36870t^7 - 190915t^6 - 393252t^5 \\- 1068040t^4 - 1508370t^3 - 2581955t^2 - 2087010t - 1885124}$ \\ 		\cmidrule{2-3}
		&$S_{7}$ & $t^{10} + 10t^8 + 35t^6 - 12t^5 + 50t^4 - 60t^3 + 25t^2 - 60t + 16$ \\ \cmidrule{2-3}
		&$S_{8}$ & $t^4 + 3t^2 + 1$ \\ \cmidrule{2-3}
		&$S_{9}$ & $t^{10} + 10t^8 + 35t^6 - 18t^5 + 50t^4 - 90t^3 + 25t^2 - 90t + 76$ \\ \cmidrule{2-3}
        &$S_{10}$ & $t^2 + 3t + 1$ \\ \cmidrule{2-3}
        &$S_{11}$ & $t^4-4t^3+11t^2-14t+31$ \\ \cmidrule{2-3}
        &$S_{12}$ & $t^4 + t^3 + 11t^2 - 4t + 16$ \\ \cmidrule{2-3}
        &$S_{13}$ & $t^2-2t-4$ \\ \cmidrule{2-3}
        &$S_{14}$ & $t^4-4t^3+21t^2-34t+41$ \\
\label{ta:polyTi}
	\end{longtable}
}
\begin{theorem}[{Barrios \cite[Theorem~1]{Bariso}}] \label{parafamilies} Let $p$ be an odd prime such that $X_0(p^2)$ has genus $0$ and let $K$ be a number field or a local field of characteristic $0$. Let $E_1$ and $E_3$ be elliptic curves defined over $K$ such that the $j$-invariants of $E_1$ and $E_3$ are not both identically $0$ or $1728$. Suppose further that $E_1$ and $E_3$ are $p^2$-isogenous elliptic curves over $K$ 
so that there is an elliptic curve $E_2$ that is $p$-isogenous to both $E_1$ and $E_3$. Then there are $t \in K$ and $d \in \OK$ such that $E_i$  are $K$-isomorphic to $\Ci$ given in Table~\ref{ta:curves} for $i \in \{1,2,3\}$, respectively.
\end{theorem}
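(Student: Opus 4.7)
The plan is to exploit the fact that $X_0(p^2)$ has genus $0$ for $p \in \{3,5\}$, so its non-cuspidal $K$-rational points admit a rational parameterization by a single Hauptmodul $t \in K$. First, I would fix such a Hauptmodul on $X_0(p^2)$ together with the two natural degeneracy (or forgetful) maps $X_0(p^2) \to X_0(p)$ remembering either the subgroup $C$ of order $p^2$ or the intermediate subgroup of order $p$. Composing with $X_0(p) \to X(1)$ yields explicit rational functions expressing $j(E_1)$, $j(E_2)$, and $j(E_3)$ as elements of $K(t)$. Up to $\overline{K}$-isomorphism this already pins down each $E_i$, and the hypothesis that the $j$-invariants are not both in $\{0,1728\}$ is precisely what avoids the loci where $X_0(p^2) \to X(1)$ ramifies enough to destroy the universal family.

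Next, to pass from $\overline{K}$-isomorphism to $K$-isomorphism, I would introduce a single quadratic twist parameter $d \in K^{\times}/K^{\times 2}$. Here I would reuse the parameterization of $p$-isogenous pairs by $\mathcal{C}_{p,1}(t,d)$ and $\mathcal{C}_{p,2}(t,d)$ established in \cite{RNT1} and apply it first to the $p$-isogeny $E_1 \to E_2$ and then to $E_2 \to E_3$. The key point is that the compatibility constraint that the two parameterizations must describe the \emph{same} middle curve $E_2$ identifies the twist parameters for the two layers, so a single $d$ suffices for all three curves simultaneously. Clearing denominators in $d$ (multiplying by a square) then allows us to assume $d \in \mathcal{O}_K$ without loss of generality.

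The third step is constructive: using the Hauptmodul expressions for $j(E_i)$ and the common twist $d$, write down explicit Weierstrass equations for $\mathcal{C}_{p^2, i}(t,d)$, absorbing the resulting rational expressions into the building-block polynomials $T_j$ (for $p=3$) and $S_j$ (for $p=5$) collected in Table~\ref{ta:polyTi}. The verification is then a direct check: compute $c_4, c_6, \Delta$ and $j$ for each proposed model and confirm they agree with the Hauptmodul formulas, and then exhibit the cyclic subgroup of order $p^2$ (via the kernel polynomial given by modular parameterizations) and apply V\'elu's formulas to confirm that the isogeny $\mathcal{C}_{p^2,1}(t,d) \to \mathcal{C}_{p^2,2}(t,d) \to \mathcal{C}_{p^2,3}(t,d)$ is defined over $K$.

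The main obstacle I anticipate is the compatibility step for the middle curve $E_2$: ensuring that a single pair $(t,d)$ simultaneously describes the three curves requires a careful identification of the Hauptmoduln on $X_0(p)$ pulled back along the two degeneracy maps from $X_0(p^2)$, together with a matching of twists that could in principle differ by a square in $K$. Handling the $j \in \{0, 1728\}$ case is another subtlety, since extra automorphisms at those elliptic points introduce cubic and quartic twists that a single quadratic parameter $d$ cannot capture, which is precisely why the theorem excludes the case where both $j$-invariants lie in $\{0,1728\}$.
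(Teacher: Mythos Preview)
The paper does not prove this theorem: it is quoted verbatim from Barrios \cite[Theorem~1]{Bariso} and used as a black box. The only argument the paper supplies is the short adjustment in Remark~\ref{rmk:dinOK}, which explains how to replace Barrios's twist parameter $d \in K^\times/(K^\times)^2$ by a representative $d' \in \mathcal{O}_K$ via a scaling $[1/\beta,0,0,0]$ with $\beta \in \mathcal{O}_K$ chosen (using \cite[Proposition~4.7.8]{CohenCANT}) to clear the finitely many primes where $d$ has negative valuation.

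Your proposal, by contrast, is a sketch of how one might reprove Barrios's theorem from scratch using the Hauptmodul on $X_0(p^2)$, the two degeneracy maps to $X_0(p)$, and a compatibility argument for the twist on the middle curve. That outline is broadly reasonable as an approach to the underlying result, but it is not what the present paper does, and it goes well beyond what is required here. If you want to match the paper, you should simply cite \cite[Theorem~1]{Bariso} for the existence of $t \in K$ and $d \in K^\times/(K^\times)^2$, and then give the one-paragraph argument of Remark~\ref{rmk:dinOK} to upgrade $d$ to an element of $\mathcal{O}_K$.
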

{\renewcommand*{\arraystretch}{1.2} 
	 \begin{longtable}{ccC{2.6in}C{2.7in}}
		\caption{The elliptic curve  $\Ci:y^2=x^3+d^2 A_{p^2,i}(t)x+d^3 B_{p^2,i}(t)$}\\
		\hline
		$p^2$ & $i$ & $A_{p^2,i}(t)$ & $B_{p^2,i}(t)$\\
		\hline
		\endfirsthead
		\caption{The elliptic curve $\Ci:y^2=x^3+d^2 A_{p^2,i}(t)x+d^3 B_{p^2,i}(t)$}\\
	\hline
	$p^2$ & $i$ & $A_{p^2,i}(t)$ & $B_{p^2,i}(t)$\\
		\hline
		\endhead
		\hline
		\multicolumn{3}{r}{\emph{continued on next page}}
		\endfoot
		\hline
		\endlastfoot	
		$9$ & $1$ & $-3 \cdot T_{2}\cdot T_{5}$ & $-2 \cdot T_{7}$\\\cmidrule{2-4}
		& $2$ & $ -3^5  \cdot T_2  \cdot T_{8}\cdot  T_9$  & $-2 \cdot 3^6\cdot  T_{10} \cdot T_{11} $ \\\cmidrule{2-4}
		& $3$ & $-3^9 \cdot T_4 \cdot T_{8} $ & $ -2 \cdot 3^{12}\cdot T_{6}$  \\\hline
$25$ & $1$ & $-3^3  \cdot S_{2}\cdot S_{4}$ & $-2 \cdot 3^3   \cdot S_{2}^{2}\cdot S_5  \cdot S_{6} $ \\\cmidrule{2-4}
& $2$ & $-3^3  \cdot 5^4 \cdot  S_{2}  \cdot S_{10} \cdot S_{11} \cdot S_{12} $ & $-2  \cdot 3^3  \cdot 5^6 \cdot S_{2}^{2} \cdot S_5  \cdot S_{8}\cdot S_{13} \cdot S_{14}$ \\\cmidrule{2-4}
& $3$ &  $-3^3  \cdot 5^8\cdot  S_{2} \cdot  S_{7} $ & $ -2  \cdot 3^3  \cdot 5^{12} \cdot   S_{2}^{2}   \cdot   S_{8}  \cdot   S_{9}$
		\label{ta:curves}
\end{longtable}
}

\begin{remark}\label{rmk:dinOK}
We note that Theorem~\ref{parafamilies} differs slightly from \cite[Theorem 1]{Bariso} in that the latter is stated more generally for any field $K$ of characteristic relatively prime to $6p^2$, and $d \in K^\times/(K^\times)^2$. In this article, we will only consider number fields or local fields of characteristic $0$. 
In this setting, if $d \in K^\times/(K^\times)^2$ and $d \not \in \mathcal{O}_K$, then we may take a different representative $d' \in \mathcal{O}_K$ such that $\mathcal{C}_{p,i}(t,d)$ is $K$-isomorphic to $\mathcal{C}_{p,i}(t,d')$.
The curves in \cite[Theorem 1]{Bariso} $C_{n, i}(t, d)$ are given in the form $y^2 = x^3 + d^2 A_{n, i}(t) x + d^3 B_{n, i}(t)$.
Let $\{\fp_j\}$ be the finite set of primes such that $\nu_{\fp_j}(d) = - e_j$ where $e_j > 0$.
By \cite[Proposition 4.7.8]{CohenCANT}, we can take $\beta \in \mathcal{O}_K$ such that $\nu_{\fp_j}(\beta) = e_j$.
The isomorphism $\tau = [1/\beta, 0, 0, 0]$ yields $\tau C_{n, i}(t, d): y^2 = x^3 + d^2 \beta^4 A_{n, i}(t) x + d^3 \beta^6 B_{n, i}(t)$.
Thus setting $d' = d \beta^2$ gives $d' \in \mathcal{O}_K$ and $d' = d$ in $K^{\times}/(K^{\times})^2$.
\end{remark}

Next, Table~\ref{ta:invariants} gives the $j$-invariants, $c_i$-invariants, and discriminants of $\mathcal{C}_{p^2,i}(t,d)$ for $i \in \{1, 2, 3\}$, which will be helpful in the work that follows.  As $d$ is a twisting parameter, we see that $j_{p, i}(t) = j_{p, i}(t, d)$ does not depend on $d$. 

{\renewcommand*{\arraystretch}{1.8} 
	\renewcommand{\arraycolsep}{0.4cm}
	\begin{longtable}{ccC{1in}C{1.1in}C{1.3in}C{1.3in}}
		\caption{The quantities $d^{-2}c_{4,i}(t,d)$, $d^{-3}c_{6,i}(t,d)$, $d^{-6} \Delta_{i}(t,d)$, and $j_{i}(t)$} \\
		\hline
		$p^2$ & $i$  &$d^{-2}c_{4,i}(t,d)$ &$d^{-3}c_{6,i}(t,d)$& $d^{-6}\Delta_{i}(t,d)$& $j_{i}(t)$ \\
		\hline
		\endfirsthead
		\caption{The quantities $j_{i}(t)$ and $2^{-12} 3^{-12}d^{-6} \Delta_{i}(t,d)$} \\
		\hline
		$p^2$ & $i$  &$d^{-2}c_{4,i}(t,d)$ &$d^{-3}c_{6,i}(t,d)$& $d^{-6}\Delta_{i}(t,d)$& $j_{i}(t)$ \\
		\hline
		\endhead
		\hline
		\multicolumn{5}{r}{\emph{continued on next page}}
		\endfoot
		\hline
		\endlastfoot	
		$9$ & $1$ & $2^4 \cdot 3^2 \cdot T_{2} \cdot T_{5}$ & $2^6 \cdot 3^3\cdot T_{7}$ & $2^{12}\cdot 3^6 \cdot T_{1}^9 \cdot T_{3}$& $T_{2}^3\cdot T_{5}^3\cdot T_{1}^{-9}\cdot T_{3}^{-1}$ \\\cmidrule{2-6}
		& $2$ & $2^4 \cdot 3^6 \cdot T_2 \cdot T_8 \cdot T_9$ & $2^6 \cdot 3^9 \cdot T_{10} \cdot T_{11}$  & $2^{12} \cdot 3^{18}  \cdot T_1^3 \cdot T_3^3$ & $T_2^3 \cdot T_8^3 \cdot T_9^3  \cdot T_1^{-3} \cdot T_3^{-3}$\\\cmidrule{2-6}
		& $3$ &  $2^4 \cdot 3^{10}  \cdot T_{4} \cdot T_{8}$  & $2^6 \cdot 3^{15}\cdot T_{6}$& $2^{12} \cdot 3^{30}\cdot T_{1} \cdot T_{3}$& $ T_{4}^3 \cdot T_8^3\cdot  T_1^{-1} \cdot T_{3}^{-1}$ \\\hline
		
		$25$ & $1$ & $2^4 \cdot 3^4 \cdot S_{2} \cdot S_{4}$ & $2^6 \cdot 3^6 \cdot S_{2}^2 \cdot S_{5} \cdot S_{6}$& $2^{12}\cdot 3^{12}\cdot S_{1} \cdot S_{2}^3 \cdot S_{3}^{25}$ & $ S_{4}^3  \cdot S_{1}^{-1} \cdot S_{3}^{-25}$  \\\cmidrule{2-6}
		& $2$ & $2^4 \cdot 3^4 \cdot 5^4 \cdot S_2 \cdot S_{10} \cdot S_{11} \cdot S_{12}$ & $2^6 \cdot 3^6 \cdot 5^6 \cdot S_2^2 \cdot S_5 \cdot S_8 \cdot S_{13} \cdot S_{14}$ & $2^{12} \cdot 3^{12} \cdot 5^{12} \cdot S_1^5 \cdot S_2^3 \cdot S_3^5$ & $S_{10}^3 \cdot S_{11}^3 \cdot S_{12}^3 \cdot S_1^{-5} \cdot S_3^{-5}$ \\\cmidrule{2-6}
		& $3$ &  $2^4 \cdot 3^4 \cdot 5^8\cdot S_{2} \cdot S_{7}$ & $2^6 \cdot 3^6 \cdot 5^{12} \cdot S_{2}^2 \cdot S_{8} \cdot S_{9}$&$2^{12}\cdot 3^{12}\cdot 5^{24}\cdot S_{1} \cdot S_{2}^3 \cdot S_{3}$ & $S_{7}^3\cdot S_{1}^{-1}  \cdot S_{3}^{-1} $
\label{ta:invariants}
	\end{longtable}
}
\section{\texorpdfstring{Results for $n=p^2$ for odd primes $p$}{}}\label{results_for_p^2}
In this section, we prove a few results for $p^2$-isogenous curves where $p \in \{3, 5\}$ that are used to establish our main results. Let $K$ be a number field. Suppose that $E_1$ and $E_3$ are $p^2$-isogenous elliptic curves over $K$ 
so that there is an elliptic curve $E_2$ that is $p$-isogenous to both $E_1$ and $E_3$. Note that the curves $E_1$, $E_2$ and $E_3$ are non $K$-isomorphic curves. The following result ensures that one can assume $t \in \OK$ when using parametrized families in Theorem~\ref{parafamilies} for discriminant ideal twins.

\begin{lemma}
\label{Lem:tisOKint}
Let $p$ be an odd prime such that $X_0(p^2)$ has genus $0$.
Suppose that $E_1$ and $E_3$ are $p^2$-isogenous discriminant ideal twins over $K$.
Then there exists $t \in \mathcal{O}_K$ and $d \in \mathcal{O}_K$ such that $E_i \cong \mathcal{C}_{p^2,i}(t, d)$.
\end{lemma}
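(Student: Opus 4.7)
The plan is to show that if $E_1$ and $E_3$ are discriminant ideal twins, then $\nup(t) \geq 0$ for every prime $\fp$ of $\OK$, so that $t \in \OK$ (Theorem~\ref{parafamilies} already furnishes $d \in \OK$). The strategy is to argue by contradiction: if some prime $\fp$ has $\nup(t) = -k$ with $k \geq 1$, then the $\fp$-adic valuations of the minimal discriminants of $E_1$ and $E_3$ must differ.

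To carry this out, first compute the $\fp$-adic valuations of the $j$-invariants under the hypothesis $\nup(t) = -k < 0$. Each polynomial $T_i$ (respectively $S_i$) in Table~\ref{ta:polyTi} has integer coefficients with leading coefficient $1$, so $\nup(T_i(t)) = -k\deg(T_i)$ whenever $\nup(t) < 0$. Substituting into the formulas for $j_1$ and $j_3$ from Table~\ref{ta:invariants}, one obtains $\nup(j_1) = -k$ and $\nup(j_3) = -p^2 k$ (that is, $-9k$ or $-25k$ for $p = 3$ or $p = 5$). Heuristically, this reflects the fact that the cusp at $t = \infty$ on $X_0(p^2)$ has different ramification indices for the two $j$-line maps, a shadow of the $p^2$-isogeny.

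Next, observe that $\nup(j_i) < 0$ forces each $E_i$ to have potentially multiplicative reduction at $\fp$. The only Kodaira reduction types with $\nup(j) < 0$ are $I_n$ (giving $\nup(\Delta^{\min}) = n = -\nup(j)$) and $I_n^{\ast}$ (giving $\nup(\Delta^{\min}) = n + 6 = -\nup(j) + 6$), uniformly across residue characteristics. Hence $\nup(\Delta_1^{\min}) = k + 6 s_1$ and $\nup(\Delta_3^{\min}) = p^2 k + 6 s_3$ for some $s_1, s_3 \in \{0, 1\}$. The discriminant ideal twin hypothesis requires $\nup(\Delta_1^{\min}) = \nup(\Delta_3^{\min})$, giving
\[
(p^2 - 1)\, k \;=\; 6(s_1 - s_3) \;\in\; \{-6,\,0,\,6\}.
\]
For $p \in \{3, 5\}$ we have $p^2 - 1 \in \{8, 24\}$, and $k \geq 1$ forces $(p^2 - 1)k \geq 8 > 6$, a contradiction. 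Therefore $k = 0$, i.e., $\nup(t) \geq 0$, and since this holds for every $\fp$ we conclude $t \in \OK$.

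The main obstacle is justifying the dichotomy $\nup(\Delta^{\min}) \in \{-\nup(j),\,-\nup(j) + 6\}$ uniformly at all primes, including those above $2$ and $3$ where Tate's algorithm is more delicate. This is, however, a standard consequence of the Kodaira classification: the types with $\nup(j) < 0$ are $I_n$ and $I_n^\ast$ regardless of residue characteristic, and the minimal discriminant valuations $n$ and $n+6$ are universal invariants of these types. Once this input is cited, the rest of the argument reduces to the short piece of arithmetic above.
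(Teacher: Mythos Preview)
Your overall strategy---compute $\nup(j_i)$ from the monic factorizations and then compare minimal discriminants via the reduction type---matches the paper's, and your unified treatment of the $I_n$ and $I_n^\ast$ cases is neater than the paper's case split. However, the step you flag as ``the main obstacle'' really is one, and your dismissal of it is incorrect. The assertion that $\nup(\Delta^{\min}) = -\nup(j) + 6$ for type $I_{n}^{\ast}$ holds only when the conductor exponent is $f=2$. By the Ogg--Saito formula $\nup(\Delta^{\min}) = f + m - 1$ with $m = n+5$ for $I_n^\ast$, so $\nup(\Delta^{\min}) = n + f + 4$; in residue characteristic~$2$ the wild part of the conductor can make $f>2$, and the offset then exceeds~$6$. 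So the dichotomy $s_i\in\{0,1\}$ fails in general, and your final inequality $(p^2-1)k \le 6$ is not justified.

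The repair is immediate and in fact simplifies your argument. Since $E_1$ and $E_3$ are isogenous they share the same conductor exponent $f$ at $\fp$, hence either both have type $I_{n_i}$ or both have type $I_{n_i}^\ast$; in either case $n_i = -\nup(j_i)$ and the offset $\nup(\Delta_i^{\min}) - n_i$ is the same for $i=1,3$. Thus $\nup(\Delta_1^{\min}) - \nup(\Delta_3^{\min}) = n_1 - n_3 = -(p^2-1)k$, which is nonzero for $k\ge 1$, and you are done without ever needing the exact value of the offset. This is precisely what the paper extracts from \cite[Table~1]{DokchitserDokchitser2015} in the potentially multiplicative case; your route via Ogg--Saito and the shared conductor is an equally valid way to reach the same conclusion, and it lets you avoid the paper's separate computation (in the multiplicative case) that $\nup(\Delta(E_{1,\fp}))>0$.
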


\begin{proof}
Since $E_1$ and $E_3$ are $p^2$-isogenous, it follows from Theorem~\ref{parafamilies} that there exist elements $t \in K$ and $d \in \mathcal{O}_K$ such that $E_i \cong \mathcal{C}_{p^2, i}(t, d)$. It remains to show that $t \in \mathcal{O}_K$ when $E_1$ and $E_3$ are discriminant ideal twins.

Towards a contradiction, assume that there exists a prime $\fp$ of $\mathcal{O}_K$ such that $\nup(t) = -k<0$, for some $k \in \mathbb{Z}$.
Given such $t$, for a monic polynomial $f(t)=t^n+a_{n-1}t^{n-1}+\cdots+a_0 \in \mathbb{Z}[t]$, then $\nup(f(t)) =\min(t^n,a_{n-1}t^{n-1},\cdots, a_0)=-kn= -k \cdot \deg(f)$.
Looking at the $j$-invariant of $E_1$ when $p = 3$ in Table~\ref{ta:invariants}, we obtain: 
\begin{align*}
    \nup(j(E_1)) &= \nup(T_2^3\cdot T_5^3 \cdot T_1^{-9} \cdot T_3^{-1}) \\
    &= 3\nup(T_2) + 3\nup(T_5) - 9\nup(T_1) - \nup(T_3) \\
    &= -3k - 9k + 9k + 2k = -k,
\end{align*}
where the third equality follows as $T_i \in \mathbb{Z}[t]$ are monic. 
By similar arguments, using Table~\ref{ta:invariants}, for both $p = 3$ and $p = 5$ we obtain: 
\begin{align*}
        \nup(j(E_1)) &= \nup(t) = -k, \\
    \nup(j(E_2)) &= p\nup(t) = -pk, \\
    \nup(j(E_3)) &= p^2\nup(t) = -p^2k. 
    \end{align*}
    It follows that the three curves must have either multiplicative reduction or potentially multiplicative reduction at $\fp$. Let $u_i \in K^\times$ be such that 
    the curve $\tau_i E_i$ is minimal at $\fp$ for $\tau_i = [u_i, 0, 0, 0]$. Let $\Delta(E_{i,\fp})$ be the minimal discriminant of $E_i$ at $\fp$. 

    If the curves have multiplicative reduction, then from \cite[Table 1]{DokchitserDokchitser2015} we know that 
    \begin{align*}
        \nup(\Delta(E_{2, \fp})) = p\nup(\Delta(E_{1, \fp})) \text{ as } \nup(j(E_2)) = p \nup(j(E_1))
    \end{align*}
    and
    \begin{align*}
        \nup(\Delta(E_{3, \fp})) = p \nup(\Delta(E_{2, \fp})) \text{ as } \nup(j(E_3)) = p \nup(j(E_2)).
    \end{align*}
    If $\nup(\Delta(E_{1, \fp})) \neq 0$, then $E_1$ and $E_3$ cannot be discriminant ideal twins. However, if $\nup(\Delta(E_{1, \fp})) = 0$, then we could potentially have discriminant twins, so let us consider $\nup(\Delta(E_{1, \fp}))$. Let $\nup(2) = f, \nup(3) = e, \nup(u_i) = \alpha_i$, and $\nup(d) = \delta \ge 0$. Then, using Table~\ref{ta:invariants}, we get the following:

$$
\arraycolsep=9pt
\def\arraystretch{1.1}
\begin{array}{rcc}
n=p^2 & \nup(c_{4, 1, \text{min}}) & \nup(\Delta(E_{1, \fp}))\\ \hline
9&4\alpha_1 + 4f + 2e + 2\delta- 4k&12\alpha_1 + 12 f + 6e + 6\delta- 11k\\\hline
25& 4f + 4e + 4\alpha_1 + 2\delta - 12k & 12f + 12e + 12\alpha_1 + 6\delta - 35k
\end{array}
$$

As the curve is integral at $\fp$, we must have $\nup(c_{4, 1, \text{min}}) \geq 0$. Consequently, 
\[
    \nup(\Delta(E_{i, \fp})) \ = \ 3\nup(c_{4, 1, \text{min}})+k \ \geq \ k \ > \ 0,
\]
so $\nup(\Delta(E_{1, \fp}))$ cannot be $0$. Therefore $E_1$ and $E_3$ cannot be discriminant ideal twins in the multiplicative reduction case. This is a contradiction to our assumption and hence $t \in \mathcal{O}_K$ in this case.  

Next, if the curves have potentially multiplicative reduction, then from  \cite[Table 1]{DokchitserDokchitser2015} we know that 
\begin{align*}
    \nup(\Delta(E_{2, \fp})) = \nup(\Delta(E_{1, \fp})) + (p-1)k \text{ as } \nup(j(E_2)) = p \nup(j(E_1))
\end{align*}
and
\begin{align*}
    \nup(\Delta(E_{3, \fp})) = \nup(\Delta(E_{2, \fp})) + p(p-1)k \text{ as } \nup(j(E_3)) = p \nup(j(E_2)).
\end{align*}
Therefore, 
\[
    \nup(\Delta(E_{3, \fp})) \ = \ \nup(\Delta(E_{1, \fp})) + (p^2-1)k \ \neq \ \nup(\Delta(E_{1, \fp})),
\] 
contradicting the assumption that $E_1$ and $E_3$ are discriminant ideal twins. It thus follows that $t \in \mathcal{O}_K$. 
\end{proof}

\subsection{Equal $j$-invariants}

In this section we categorize all $p^2$-isogenous discriminant (ideal) twins for $p = 3, 5$ where the two curves share the same $j$-invariant.
Note that the parameterized families of elliptic curves $\mathcal{C}_{p, i}(t,d)$ fail to capture the case when two $p^2$-isogenous elliptic curves share the same $j$-invariant $0$ or $1728$.
Thus, as with \cite{RNT1}, we break our theorems into two categories $j = 0, 1728$ and otherwise.
Recall from \cite[Lemma 3.1]{RNT1} that isogenous curves with the same $j$-invariant have complex multiplication (CM) and will be isomorphic over any field containing their CM endomorphism ring.
Thus they can only possibly be discriminant (ideal) twins over number fields that do not contain their CM endomorphism ring.

\remark{When the discriminant of a genus one curve is $0$, the curve is singular.  If $t_0$ is such that $\mathcal{C}_{n, i}(t_0, d)$ has discriminant $0$, then we call $t_0$ a singular value.}

\begin{theorem}
    Let $p \in \{3, 5\}$.
    There are no non-isomorphic $p^2$-isogenous curves that both have $j$-invariant $0$ or $1728$.
    \label{Thm:equaljinv_0_1728}
\end{theorem}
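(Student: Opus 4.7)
The plan is to use an endomorphism-ring argument, reinforced by a Galois descent in one subcase. Suppose for contradiction that $E_1$ and $E_3$ are non-$K$-isomorphic $p^2$-isogenous elliptic curves over $K$ with $j(E_1) = j(E_3) = j_0 \in \{0, 1728\}$, joined by a cyclic $K$-rational isogeny $\varphi \colon E_1 \to E_3$ of degree $p^2$. Since the $j$-invariants agree, there is a $\overline{K}$-isomorphism $\psi \colon E_3 \to E_1$, so $\beta := \psi \circ \varphi$ lies in $\mathcal{O} := \mathrm{End}(E_{1,\overline{K}})$, which equals $\mathbb{Z}[\zeta_3]$ when $j_0 = 0$ and $\mathbb{Z}[i]$ when $j_0 = 1728$. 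The element $\beta$ has norm $p^2$ and, because $\psi$ is an isomorphism, $\ker(\beta) = \ker(\varphi)$ is cyclic of order $p^2$.

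Next I would classify the norm-$p^2$ elements of $\mathcal{O}$ with cyclic kernel via the splitting of $p$ in $\mathcal{O}$. Since $E[\mathfrak{a}] \cong \mathcal{O}/\mathfrak{a}$ as abelian groups, cyclicity of $\ker(\beta)$ forces $(\beta)$ to be a proper prime power above $p$, which in turn forces $p$ to split in $\mathcal{O}$: if $p$ is inert or ramified the only norm-$p^2$ ideal is $(p)$, whose quotient $\mathcal{O}/(p)$ is a two-dimensional $\mathbb{F}_p$-vector space, hence isomorphic to $(\mathbb{Z}/p)^2$, which is not cyclic. A direct check now gives: $3$ ramifies in $\mathbb{Z}[\zeta_3]$ and is inert in $\mathbb{Z}[i]$; $5$ is inert in $\mathbb{Z}[\zeta_3]$ (as $x^2 + x + 1$ is irreducible modulo $5$) and splits in $\mathbb{Z}[i]$ as $(2+i)(2-i)$. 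Hence three of the four pairs $(j_0, p) \in \{(0,3),\,(0,5),\,(1728,3)\}$ admit no valid $\beta$, yielding the desired contradiction at once.

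The remaining case $(j_0, p) = (1728, 5)$ genuinely admits norm-$25$ endomorphisms with cyclic kernel, namely the eight unit multiples of $(2 \pm i)^2$, i.e.\ $\beta \in \{\pm 3 \pm 4i,\ \pm 4 \pm 3i\}$, so the contradiction must come from Galois descent. Writing the twist cocycle $\xi \colon \mathrm{Gal}(\overline{K}/K) \to \mathrm{Aut}(E_{1,\overline{K}}) = \mu_4$ via $\psi^\sigma = \xi(\sigma)\,\psi$, the $K$-rationality $\varphi^\sigma = \varphi$ translates into $\beta^\sigma = \xi(\sigma)\,\beta$ for every $\sigma$. If $K \supset \mathbb{Q}(i)$, Galois fixes $\beta \in \mathcal{O}$, so $\xi \equiv 1$ and $E_1 \cong E_3$ over $K$, contradicting non-isomorphism. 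If $K \not\supset \mathbb{Q}(i)$, choose $\sigma_0$ restricting to complex conjugation on $K(i)$; then $\bar\beta = \xi(\sigma_0)\,\beta$, forcing $\bar\beta/\beta \in \mu_4$. A direct computation gives $\bar\beta/\beta = \bar\beta^{\,2}/25 \in \{(\pm 7 \pm 24i)/25\}$, none of which is a fourth root of unity, the final contradiction.

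The principal obstacle is precisely this last descent step: unlike the other three pairs, $(1728, 5)$ really does produce cyclic-kernel endomorphisms of norm $25$, so no purely algebraic count in $\mathbb{Z}[i]$ alone suffices and one must verify the numerical fact that the ratio $\bar\beta/\beta$ avoids $\mu_4$ for all eight candidate values of $\beta$.
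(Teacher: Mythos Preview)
Your proof is correct and takes a genuinely different route from the paper's. The paper argues computationally: it checks the multiplicity of $y - j_0$ in the classical modular polynomial $\Phi_{p^2}(j_0, y)$ for each pair $(p, j_0)$, finding that only $(p, j_0) = (5, 1728)$ admits a $p^2$-isogeny from $j_0$ back to $j_0$; it then handles that case by factoring the $25$-isogeny as two $5$-isogenies, using the explicit $5$-isogeny parameterization of \cite{Bariso} to determine the intermediate $j$-invariant $j(E_2)$, and verifying via $\Phi_5$ that $E_2$ has only one $5$-isogeny to a curve with $j = 1728$ (forcing $E_1 \cong E_3$). You instead pass through the CM endomorphism ring: composing $\varphi$ with a $\overline{K}$-isomorphism produces a norm-$p^2$ element of $\mathbb{Z}[\zeta_3]$ or $\mathbb{Z}[i]$ with cyclic kernel, and the structure $E[\beta]\cong \mathcal{O}/(\beta)$ shows such elements exist only when $p$ splits, which disposes of three of the four pairs at once; the surviving pair $(1728, 5)$ then falls to a short Galois-cocycle computation showing $\bar\beta/\beta \notin \mu_4$. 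Your approach is more conceptual and self-contained, requiring no modular-polynomial factorizations or external parameterizations; the paper's approach stays within the explicit-family framework used elsewhere in the article and produces the intermediate $j$-invariants along the way.
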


\begin{proof}
    Following \cite[Lemma 3.2]{RNT1}, we compute valuations of the modular polynomials $\Phi_{p^2}(j, y)$ for $j = 0, 1728$ and $p = 3, 5$ at $y-j$.
    Only $\Phi_{25}(1728, y)$ has any appropriate factors, $\nu_{y-1728}(\Phi_{25}(1728, y)) = 2$; for code see \cite{GitHubDIT}.
    Thus there are no non-isomorphic $9$-isogenous curves that both have $j$-invariant $0$ or $1728$ and there are no non-isomorphic $25$-isogenous curves that both have $j$-invariant $0$.
    This leaves us to check the $25$-isogeny case when both $j$-invariants are $1728.$

    Every cyclic $25$-isogeny $E_1 \rightarrow E_3$ factors as two $5$-isogenies: $E_1 \rightarrow E_2 \rightarrow E_3$.
    If all three curves have $j$-invariant $1728$, then by \cite[Proposition 3.4]{RNT1}, all three curves were actually isomorphic.
    To find the possible $j$-invariants for $E_2$ we use the $5$-isogeny parameterization found in \cite{Bariso} to set $j(\mathcal{C}_{5,1}(t, d)) = 1728$. We then solve for $t$ and determine the possible $j$-invariants for $\mathcal{C}_{5, 2}(t, d)$. The polynomial factors as $j(\mathcal{C}_{5,1}(t, d)) - 1728  = (t^2 + 22t + 125)(t^2 - 500t - 15625)^2/t^5$ with the roots of the factor $t^2 + 22t + 125$ corresponding to singular values.
    Thus we look at the roots of $t^2 - 500t - 15625,$ which are $t_0 = 250 \pm 125\sqrt{5}$. These give $j$-invariants $1728$ and $\alpha, \bar{\alpha} = 22015749613248 \pm 9845745509376\sqrt{5}$ for $\mathcal{C}_{5,1}(t_0, 1)$ and $\mathcal{C}_{5,2}(t_0, 1)$ respectively.
    Next, we check whether the curve $\mathcal{C}_{5,2}(t_0, 1)$ is $5$-isogenous to another elliptic curve with $j$-invariant $1728$ not isomorphic to $\mathcal{C}_{5,2}(t_0, 1)$.
    To do so, we reuse the trick from the beginning of this proof and factor the modular polynomial $\Phi_5(\alpha, y)$ to find $\nu_{y - 1728}(\Phi_5(\alpha, y)) = 1$.
    As any curve with $j$-invariant $\alpha$ only has one $5$-isogeny to a curve with $j$-invariant $1728$ over $\overline{K}$, there are no (non-isomorphic) $25$-isogenous curves that both have $j$-invariant 1728.
\end{proof}

\begin{theorem}
\label{Thm:equaljinv}
    Let $p = 3, 5$.
    If $E_1, E_3$ are $p^2$-isogenous discriminant ideal twins defined over a number field $K$ with $j = j(E_1) = j(E_3)$, but $j \neq 0$ or $1728$, then $j$ is as follows.
    If $p = 3$, then $t_0$ is a root of $t^2 - 6t - 18$ and $j(E_1) = j(E_3) = 14776832 t_0 + 32440512$ while $j(E_2) = 1728$ with CM endomorphism ring in $\mathbb{Q}(i)$.
    If $p = 5$, then $t_0$ is a root of $t^2 - 2t - 4$ and $j(E_1) = j(E_2) = 9845745509376 t_0 + 12170004103872$ and again $j(E_2) = 1728$ with CM endomorphism ring in $\mathbb{Q}(i)$.
\end{theorem}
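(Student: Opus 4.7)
The plan is to reduce the hypothesis $j(E_1)=j(E_3)$ to a polynomial equation in the parameter $t$ from Theorem~\ref{parafamilies}, and to isolate its roots by eliminating those that fall into the excluded cases. Since the common $j$-invariant is by assumption neither $0$ nor $1728$, Theorem~\ref{parafamilies} applies and I may write $E_i\cong \mathcal{C}_{p^2,i}(t,d)$ for some $t\in K$ and $d\in \OK$. The $j$-invariants recorded in Table~\ref{ta:invariants} are independent of the twisting parameter $d$, so the condition $j_1(t)=j_3(t)$ becomes, after clearing denominators, a polynomial identity $P_p(t)=0$; for $p=3$ this reads
$$
T_2^{3}T_5^{3}\;-\;T_4^{3}T_8^{3}T_1^{8}\;=\;0,
$$
a polynomial of degree $20$, and an analogous (higher-degree) expression in the $S_i$'s arises for $p=5$.

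Next, I would factor $P_p(t)$ completely over $\mathbb{Q}$ using a computer algebra system (see \cite{GitHubDIT}) and classify each irreducible factor according to the behavior of $j_1(t_0)=j_3(t_0)$ at its roots $t_0$. Three families of factors are to be discarded: those whose roots make $T_1(t_0)T_3(t_0)=0$ (resp.\ $S_1(t_0)S_2(t_0)S_3(t_0)=0$), which are singular values and do not parameterize elliptic curves; those whose roots satisfy $c_{4,1}(t_0,d)=c_{4,3}(t_0,d)=0$, corresponding to the excluded value $j=0$; and those whose roots satisfy $c_{6,1}(t_0,d)=c_{6,3}(t_0,d)=0$ with $\Delta_i(t_0,d)\neq 0$, corresponding to the excluded value $j=1728$. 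The claim to be verified is that, once all such factors are removed, the only irreducible factor of $P_p$ left is the quadratic $T_{10}=t^2-6t-18$ for $p=3$ and $S_{13}=t^2-2t-4$ for $p=5$; this is a routine matching of the factorization of $P_p$ against the explicit entries of Table~\ref{ta:invariants}.

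Once the surviving quadratic is identified, I would substitute $t_0$ into $j_1(t)$ from Table~\ref{ta:invariants} and reduce modulo the minimal polynomial of $t_0$. Since the reduction is linear, $j_1(t_0)$ takes the form $\alpha t_0+\beta$ with $\alpha,\beta\in\mathbb{Z}$, and explicit computation recovers the stated values $14776832\, t_0+32440512$ for $p=3$ and $9845745509376\, t_0+12170004103872$ for $p=5$. For the final assertion about $E_2$, observe from Table~\ref{ta:invariants} that $T_{10}$ appears as a factor of $d^{-3}c_{6,2}(t,d)$ in the $p=3$ row and $S_{13}$ appears as a factor of $d^{-3}c_{6,2}(t,d)$ in the $p=5$ row. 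Thus at $t=t_0$ we have $c_{6,2}(t_0,d)=0$ while $\Delta_2(t_0,d)\neq 0$, forcing $j(E_2)=1728$; since a curve with $j$-invariant $1728$ has CM by an order in $\mathbb{Q}(i)$ and the CM field is preserved under isogeny, $E_1$ and $E_3$ share this CM field.

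The main obstacle is the factorization and the bookkeeping of which irreducible factor of $P_p$ belongs to which excluded case. The polynomial has degree $20$ for $p=3$ and is substantially larger for $p=5$, so this step is necessarily computational; the conceptual work is to verify that nothing unexpected survives beyond the quadratic $T_{10}$ (resp.\ $S_{13}$), after which the explicit $j$-invariant formulas and the identification $j(E_2)=1728$ both follow from direct inspection of Table~\ref{ta:invariants}.
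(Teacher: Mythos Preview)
Your overall strategy---factor $j_1(t)-j_3(t)$ and analyze the irreducible factors---is exactly what the paper does. The gap is in your discard criterion. You propose removing factors corresponding to singular values and to $j=0$ or $j=1728$, and then assert that only $T_{10}$ (resp.\ $S_{13}$) survives. That assertion is false: the numerator $T_2^3T_5^3-T_4^3T_8^3T_1^8$ has degree $20$, and a direct check shows that neither $T_1$ nor $T_3$ divides it, that $c_{4,1}$ and $c_{4,3}$ share no common root (so no $j=0$ factor), and that common roots of $T_6,T_7$ account for little if anything. After removing what your filter actually catches, far more than the quadratic $T_{10}$ remains.

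What you are missing is the hypothesis hidden in ``discriminant ideal twins'': by Definition~\ref{def:disctwins}, $E_1$ and $E_3$ must be \emph{non-isomorphic} over $K$. Two curves with the same $j$-invariant $\neq 0,1728$ are quadratic twists and may well be $K$-isomorphic; the paper's proof handles this by, for each non-singular factor $f$, computing $\mathcal{C}_{p^2,1}(t_0,1)$ and $\mathcal{C}_{p^2,3}(t_0,1)$ over $\mathbb{Q}(t_0)\cong\mathbb{Q}[t]/(f)$ and testing whether they are isomorphic there. All factors other than $T_{10}$ (resp.\ $S_{13}$) turn out to give isomorphic curves, and hence are excluded not because $j\in\{0,1728\}$ but because they violate the non-isomorphism requirement. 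The paper additionally verifies for the surviving factor that the curves really are discriminant ideal twins. Your observation that $T_{10}\mid c_{6,2}$ (resp.\ $S_{13}\mid c_{6,2}$) forces $j(E_2)=1728$ is correct and is a clean way to recover that part of the statement.
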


\begin{proof}
The proof is exactly analogous to the $p$-isogenous case for $p = 2, 3, 5, 7, 13$ in \cite{RNT1}.
For $p^2 = 9, 25$, factor the parameterized $j$-invariants $j_1(t) - j_3(t)$.
For each factor $f(t)$, we let $\mathbb{Q}(t_0) \cong \mathbb{Q}[t]/(f(t))$ be the associated number field. If $t_0$ is not a singular value, then compute $E_1(t_0, 1), E_3(t_0, 1)$ and check if they are isomorphic over $\mathbb{Q}(t_0)$. 
If they are not isomorphic, then check if they are discriminant ideal twins and compute their CM field.

Following this algorithm in \cite{GitHubDIT}, we find that for $p^2 = 9$ the factors $t^2 + 3t + 9$ and $t - 3$ give singular values. 
The factor $t^2 - 6t - 18$ gives non-isomorphic curves over $\mathbb{Q}(t_0)$ that are discriminant twins.
The rest of the factors give isomorphic curves over $\mathbb{Q}(t_0)$. 
Similarly, for $p^2 = 25$ we find that the factors $t^4 + t^3 + 6t^2 + 6t + 11, t^2 + 4$ and $t - 1$ give singular values. 
The factor $t^2 - 2t - 4$ gives non-isomorphic curves that are discriminant twins.
As in the $p^2 = 9$ case, the rest of the factors give curves isomorphic over $\mathbb{Q}(t_0)$. 
Note that unlike the $p = 2, 3, 5, 7, 13$ cases in \cite{RNT1}, there are no non-isomorphic, non-discriminant ideal twin cases.
\end{proof}

\section{\texorpdfstring{Proof for $n=9$}{}}\label{proof_for_n=9}

In this section, we parameterize all $9$-isogenous discriminant ideal twins over number fields that do not share the same $j$-invariant.
As we classified all $9$-isogenous discriminant ideal twins over number fields that do share the same $j$-invariant, this completes the classification.
Our main tool is the $9$-isogeny elliptic curve parameterization due to Barrios \cite{Bariso}.  
Consider the polynomials $T_{m}$ for $1\leq m \leq 11$ in Table \ref{ta:polyTi}. Then for $i=1,3$ we can parameterize the two $9$-isogenous curves as in Table \ref{ta:curves}. Moreover,  the invariants $j_{i}$, $\Delta_i$, $c_{4,i}$, and $c_{6,i}$ are as given in Table \ref{ta:invariants}. 

\begin{theorem}\label{n=9_if}
  Let $E_1$ and $E_3$ be two $9$-isogenous elliptic curves over a number field $K$ with the property that their $j$-invariants are not equal. If $E_1$ and $E_3$ are discriminant ideal twins, then there exists $t\in \mathcal{O}_K$ and $d\in \mathcal{O}_K$ such that $E_i \cong \mathcal{C}_{9, i}(t, d)$, and for each prime
$\mathfrak{p}$ of $\mathcal{O}_K$, 
  \begin{equation}
  \label{eq:n9}
  \nup(t-3) = 3k_\fp,\qquad 0\leq k_\fp \leq \nup(3). 
\end{equation}
Moreover, if $E_1$ and $E_3$ are discriminant twins, then $t$ satisfies \eqref{eq:n9} and $(t-3)^8\in\mathcal{O}_{K}^{12}$.
\end{theorem}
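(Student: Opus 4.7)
The plan is to translate the discriminant ideal twin hypothesis, prime by prime, into an arithmetic identity in $\nup(t-3)$ via the discriminant formulas of Table~\ref{ta:invariants}. Theorem~\ref{parafamilies} supplies $t\in K$ and $d\in\mathcal{O}_K$ with $E_i\cong\mathcal{C}_{9,i}(t,d)$, and Lemma~\ref{Lem:tisOKint} upgrades $t$ to $\mathcal{O}_K$ under the twin hypothesis. Table~\ref{ta:invariants} then gives
\[
\frac{\Delta_3}{\Delta_1}=\frac{3^{24}}{(t-3)^8}\qquad\text{so}\qquad \nup(\Delta_3)-\nup(\Delta_1)=24\nup(3)-8\nup(t-3).
\]
For each prime $\fp$, fix admissible changes of variable $[u_i,r_i,s_i,w_i]$ taking $\mathcal{C}_{9,i}(t,d)$ to a $\fp$-minimal integral model; since the discriminant transforms as $u_i^{-12}\Delta_i$, the hypothesis $\nup(\Delta_1^{\min})=\nup(\Delta_3^{\min})$ becomes the key local identity
\begin{equation}
12\bigl(\nup(u_3)-\nup(u_1)\bigr)=24\nup(3)-8\nup(t-3).\tag{$\ast$}
\end{equation}
Reducing $(\ast)$ modulo $3$ forces $2\nup(t-3)\equiv 0\pmod 3$, hence $3\mid\nup(t-3)$, so I write $\nup(t-3)=3k_\fp$ with $k_\fp\in\mathbb{Z}_{\geq 0}$ (using $t\in\mathcal{O}_K$).

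For the upper bound $k_\fp\leq\nup(3)$, I plan to argue by contradiction. Supposing $k_\fp>\nup(3)$ at some $\fp$, write $t=3+\pi^{3k_\fp}w$ for a uniformizer $\pi$ and a unit $w$, substitute into Table~\ref{ta:invariants}, and compute the signatures $\mathrm{sig}_\fp$ of the supposed $\fp$-minimal models of $E_1$ and $E_3$. Tracking the $\fp$-valuations of $T_2,T_3,T_5,T_7,T_9,T_{11}$ at this $t$, I would show that at least one of the two signatures admits a further admissible reduction---via the standard criterion ``$\nup(c_4)<4$ or $\nup(\Delta)<12$'' at $\fp\nmid 6$, or via Kraus-type conditions at $\fp\mid 3$---contradicting minimality. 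The careful case analysis at primes above $3$, where Tate's algorithm branches into many subcases, is the main obstacle.

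Finally, under the stronger discriminant twin hypothesis the same models may be chosen so that $u_1^{-12}\Delta_1=u_3^{-12}\Delta_3$ holds as an equality in $\mathcal{O}_K$ at each $\fp$, giving $(u_1/u_3)^{12}=(t-3)^8/3^{24}$. Setting $v:=9\,u_1/u_3\in K^\times$ yields $v^{12}=(t-3)^8$, and $(\ast)$ gives $\nup(v)=2\nup(3)+(\nup(u_1)-\nup(u_3))=2k_\fp\geq 0$. Any two $12$-th roots of $(t-3)^8$ in $K^\times$ differ by a root of unity, hence by a global unit, so the same $v$ satisfies $\nu_{\fp'}(v)=2k_{\fp'}\geq 0$ at every other prime $\fp'$. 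Therefore $v\in\mathcal{O}_K$ and $(t-3)^8\in\mathcal{O}_K^{12}$.
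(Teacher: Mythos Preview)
Your overall plan---derive a local identity from the discriminant ratio and then run a signature analysis---matches the paper. Two points deserve comment.

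For the upper bound $k_\fp\le\nup(3)$, your framing of the contradiction (``at least one of the two signatures admits a further admissible reduction, contradicting minimality'') is off. In the regime $k_\fp>\ell:=\nup(3)$, both curves \emph{can} be made minimal simultaneously; the contradiction is that their minimal discriminant valuations differ. Concretely, the paper first replaces $E_3$ by $[3^2,0,0,0]E_3$ and then observes that reducing $E_3$ by $[\pi^i,0,0,0]$ and $E_1$ by $[\pi^{i+2\ell},0,0,0]$ yields identical $(\nup(c_4),\nup(c_6))$-pairs for the two curves, so the value of $i$ that makes $E_3$ minimal also makes $E_1$ minimal (by Papadopoulos' tables), and then $\nup(\Delta(E_{1,\fp}))-\nup(\Delta(E_{3,\fp}))=24(k_\fp-\ell)>0$, contradicting the twin hypothesis. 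The case $\ell=0$ is similar: the starting models are already minimal (directly, or via \cite[Tableau~V]{Papadopoulos1993} when $\fp\mid 2$) with unequal $\Delta$-valuations. Your criterion ``$\nup(c_4)<4$ or $\nup(\Delta)<12$'' tests minimality of a \emph{given} model, but $(\ast)$ only determines $\nup(u_1)-\nup(u_3)$, not either separately, so you never produce a specific model on which to run it.

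Your discriminant-twin argument, by contrast, is cleaner than the paper's. The paper shows $(t-3)^8\in\mathcal{O}_{K_\fp}^{12}$ for each $\fp$ and then invokes the Grunwald--Wang theorem to descend to $\mathcal{O}_K^{12}$. You bypass this: since the $\fp$-minimal models and the changes of variable $u_{i,\fp}$ are defined over $K$ (not merely $K_\fp$), each $v_\fp=9\,u_{1,\fp}/u_{3,\fp}$ already lies in $K^\times$ with $v_\fp^{12}=(t-3)^8$, and any two such $12$th roots differ by a root of unity in $K$, hence a global unit. Fixing one $v$ and reading off $\nu_{\fp}(v)=\nu_\fp(v_\fp)=2k_\fp\ge 0$ from $(\ast)$ at every $\fp$ gives $v\in\mathcal{O}_K$ directly, with no appeal to local--global principles.
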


\begin{proof} 
By Lemma \ref{Lem:tisOKint}, there exist $t \in \mathcal{O}_{K}$ and $d\in \mathcal{O}_K$ such that $E_{i}\cong \mathcal{C}_{p^2,i}(t,d)$, and thus we can say $E_{i}$ is given by a model 
\[y^2=x^3+d^2A_{9,i}(t)x+d^3B_{9,i}(t),\] with  $A_{9,i}(t), B_{9,i}(t)\in \mathcal O_K$ as defined in Table~\ref{ta:curves}. 
Let
$\mathfrak{D}_{i}^{\text{min}}$ denote the minimal discriminant ideal of
$E_{i}$; then $\mathfrak{D}_{1}^{\text{min}}=\mathfrak{D}_{3}^{\text{min}}$
since $E_{1}$ and $E_{3}$ are discriminant ideal twins.
Note that the models currently given for $E_1$ and $E_3$ are not necessarily minimal at a given prime $\fp$ and their discriminants $\Delta_1$ and $\Delta_3$ are not necessarily minimal at $\fp$ either.
As we are interested in the minimal discriminant ideal, we proceed prime by prime to construct $\mathcal{O}_K$-integral, $\fp$-minimal models of $E_1$ and $E_3$ to access and compare $\nup(\mathfrak{D}_{1}^{\text{min}})$ and $\nup(\mathfrak{D}_{3}^{\text{min}})$.

Let $\fp$ be a prime such that $\nup(3) \geq 1$. Then the signature of $E_3$ is of the form $(\geq 10\nup(3), \ \geq 15\nup(3), \ \geq 30\nup(3))$. In this case, by \cite[Tableau III]{Papadopoulos1993}, we know that the curve is not minimal at $\fp$. We can therefore make the change of variable $\tau_3 = [3^2, 0, 0, 0]$ to reduce this curve. From now on, replace $E_3$ by $\tau_3E_3$. 

For any prime $\mathfrak{p}$, we take $E_{i,\mathfrak{p}}$ to be a local minimal model of $E_{i}$ at $\mathfrak{p}$. Suppose further that $E_{i,\fp}$ is $\mathcal{O}_{K}$-integral.
Consequently, $\nu_{\mathfrak{p}%
}(\Delta(E_{1,\fp}))=\nu_{\mathfrak{p}}(\Delta(E_{3,\mathfrak{p}}))$, where $\Delta(E_{i, \fp})$ is the discriminant of the local minimal model $E_{i, \fp}$. 
In particular, there exists $\mu_{\mathfrak{p}}\in K$ such that
$\nu_{\mathfrak{p}}(\mu_{\mathfrak{p}})=0$ and $\Delta(E_{1,\mathfrak{p}})=\mu_{\mathfrak{p}}\Delta(E_{3,\mathfrak{p}})$. By
\cite[Proposition VII.1.3]{Silverman2009}, there exists $u_{i,\mathfrak{p}} \in\mathcal{O}_{K}$ such that $u_{i,\mathfrak{p}}^{-12}\Delta_i
=\Delta(E_{i,\mathfrak{p}})$. Consequently, $u_{1,\mathfrak{p}
}^{-12}\Delta_1=\mu_{\mathfrak{p}}u_{3,\mathfrak{p}}^{-12}\Delta_3$. Then 
\begin{equation}\label{mueqn9}
(t-3)^8=\frac{\Delta_1}{\Delta_3}=\mu_\fp\left(
\frac{u_{1,\fp}}{u_{3,\fp}}\right)^{12}. 
\end{equation}
Note that if $E_1$ and $E_3$ are discriminant twins, then $\mu_\fp = 1$. 

Let $\nup(T_{1}) = \nup(t-3) = s_\fp\geq 0$ and $\nup(3)=\ell\geq 0$. Suppose first that $E_1$ and $E_3$ are discriminant ideal twins.  
Applying $\nup(\cdot)$ to both sides of \eqref{mueqn9}, we have 
\[
8s_\fp = 12\nup(u_{1, \fp}/u_{3, \fp}).
\]
Note that since the valuation takes integral values, $s_\fp =3k_\fp$ for some $k_\fp \geq 0$. We want to show that $k_\fp\leq \ell$. In order to do this we investigate the possible values of $\nup(u_{1, \fp}/u_{3, \fp})$. 

We start by showing that we can locally take the twisting parameter $d$ up to squares.
For a prime $\fp$ of $\mathcal{O}_K$, let $\delta = \nup(d)$, and let $\pi \in \fp \setminus \fp^2$ be a uniformizer for $R_{\p}$. We may write $d = d_\p\pi^{2i_\p}$, where $\delta_\p = \nup(d_\p)\in\{0,1\}$. Then, using the change of variables $[\pi^{i_\p},0,0,0]$, we obtain the $\p$-integral model 
\[E_i': y^2 = x^3+d_\p^2A_{9,i}(t)x+d_\p^3B_{9,i}(t).\]
Note that if the signature of $E_i$ is $(\nup(c_{4,i}), \nup(c_{6,i}), \nup(\Delta_i))$, then the signature of $E_i'$ is $(\nup(c_{4,i}) - 4i_\p, \nup(c_{6,i}) - 6i_\p, \nup(\Delta_i) - 12i_\p)$ . As we are concerned only with the signature of a minimal model, we therefore consider the signature of $E_i'$ rather than $E_i$.
From hereon, we replace $E_i$ with $E'_i$.

First, we assume that $\ell = 0$. We will show that in this case $k_\fp=0$. Towards the contradiction, let us assume that $k_\fp\neq 0$. 
This forces $\nup(T_2) = \nup(T_3) = \nup(T_5)= \nup(T_7) = 0$.
Then the signatures of the two curves are: 
\begin{center}
    \begin{tabular}{ l l l }
        $\nup(c_{4, 1}) = 4\nup(2)+2{\delta_\p}$, & $\nup(c_{6, 1}) = 6\nup(2)+3{\delta_\p}$, & $\nup(\Delta_1) = 12\nup(2)+6{\delta_\p} + 9s_\fp$,  \\
        $\nup(c_{4, 3}) = 4\nup(2)+2{\delta_\p}$, & $\nup(c_{6, 3}) = 6\nup(2)+3{\delta_\p}$, & $\nup(\Delta_3) = 12\nup(2)+6{\delta_\p} + s_\fp$.
    \end{tabular}
\end{center}
If $\nup(2) = 0$, then the two curves are already minimal at $\fp$. 
If $\nup(2) \neq 0$, we can use \cite[Tableau V]{Papadopoulos1993} to see that $E_1$ and $E_3$ are minimal at $\fp$. In both cases, we observe that the two curves cannot be discriminant ideal twins (as $s_\fp=3k_\fp \neq 0$). This is a contradiction, hence $k_\fp=0$.

Next, we consider the case when $\ell \geq 1$. We want to show that $k_\fp\leq \ell$, i.e., $s_\fp\leq 3\ell$. Towards a contradiction, assume that $3\ell < s_\fp$. In this case, using Table \ref{Casesn=9}, we observe that the signatures of the two curves are:
     \begin{center}
        \begin{tabular}{ l l l }
            $\nu_\fp(c_{4, 1}) = 12\ell+2{\delta_\p}$, & $\nu_\fp(c_{6, 1}) = 18\ell+3{\delta_\p}$, & $\nu_\fp(\Delta_1) = 9\ell+6{\delta_\p}+9s_\fp$, \\ 
            $\nu_\fp(c_{4, 3}) = 4\ell+2{\delta_\p}$, & $\nu_\fp(c_{6, 3}) = 6\ell+3{\delta_\p}$, & $\nu_\fp(\Delta_3) = 9\ell+6{\delta_\p}+ s_\fp$. 
        \end{tabular}
     \end{center}
If we reduce the curve $E_3$ by $\tau_3=[\pi^i, 0, 0, 0]$  (for some integer $i \geq 0$) and the curve $E_1$ by $\tau_1 = [\pi^{i+2\ell}, 0, 0, 0]$, then the new reduced curves will have the following signatures:
\begin{center}
        \begin{tabular}{ l l l }
            $\nu_\fp(c_{4, 1}) = 4\ell+2{\delta_\p} - 4i$, & $\nu_\fp(c_{6, 1}) = 6\ell+3{\delta_\p} - 6i$, & $\nu_\fp(\Delta_1) = 9\ell+3k_\fp + 6{\delta_\p} - 12i + 24(k_\fp-\ell)$, \\ 
            $\nu_\fp(c_{4, 3}) = 4\ell+2{\delta_\p}- 4i$, & $\nu_\fp(c_{6, 3}) = 6\ell+3{\delta_\p}- 6i$, & $\nu_\fp(\Delta_3) = 9\ell+3k_\fp + 6{\delta_\p}-12i$. 
        \end{tabular}
     \end{center}
Observe that $\tau_1 E_1$ is an integral curve if and only if $\tau_3 E_3$ is an integral curve. Moreover, if we consider the value $i$ for which one of these curves is minimal, then using \cite[Tableau III]{Papadopoulos1993}, we observe that the other curve must also be minimal. It follows that these reductions give us minimal models $E_{1, \fp}$ and $E_{3, \fp}$. By noting that $\nup(\Delta(E_{1, \fp})) = 24(k_\p-\ell) + \nup(\Delta(E_{3,\fp})) > \nup (\Delta(E_{3, \fp}))$ (since $3k_\fp = s_\fp > 3\ell$), we conclude that in this case the two curves cannot be discriminant ideal twins, reaching a contradiction.

Hence we conclude that, for all primes $\fp$ of $\mathcal{O}_K$, $t$ satisfies
\begin{equation*}
  \nup(t-3) = 3k_\p, \quad \text{ for } 0\leq k_\p \leq \nup(3).
\end{equation*}

Lastly, suppose that $E_{1}$ and $E_{2}$ are discriminant twins. Then for each
prime $\mathfrak{p}$ of $\mathcal{O}_K$, we can take $\mu_{\mathfrak{p}}=1$ in
\eqref{mueqn9}. Consequently, we have that for $\mathfrak{p}$ with $\nup(3)\neq0$, we can
write%
\begin{equation}
(t-3)=\left(u_\fp\pi\right)
^{3},
\end{equation}
where $u_\fp$ is a unit in $\mathcal{O}_{K_{\mathfrak{p}}}$ and $\pi$ is a uniformizer. 
For prime $\mathfrak{p}$ with $\nup(3)=0$, it follows from \eqref{mueqn9} that there is a unit $u_\fp$  in $\mathcal{O}_{K_{\mathfrak{p}}}$ such that
\begin{equation}
(t-3)^8=u_\fp^{12}.
\end{equation}
Now consider the inclusion $K\hookrightarrow K_{\mathfrak{p}}$. Then $(t-3)^{8}\in\mathcal{O}_{K_{\mathfrak{p}}}^{12}$ for each prime $\fp$ of $K$.
 We conclude by the Grunwald-Wang
 Theorem \cite[Corollary 2]{MR33801} that $(t-3)^{8}\in\mathcal{O}_K^{12}$.
\end{proof}

\begin{theorem}\label{n=9_onlyif}
  Let $E_1$ and $E_3$ be two $9$-isogenous elliptic curves over a number field $K$ such that their $j$-invariants are not the same.
   If there exists $t \in \mathcal{O}_K$ and $d \in \mathcal{O}_K$  such that $E_{i}$ is
$K$-isomorphic to the elliptic curve $\mathcal{C}_{9,i}(t, d)$, and for
each prime $\mathfrak{p}$ of $\mathcal{O}_K$,
  \begin{equation}
  \label{DITn=9eq2}
    \nup(t-3) = 3k_\p, \qquad 0 \leq k_\p \leq \nup(3),
  \end{equation}
    then $E_1$ and $E_3$ are discriminant ideal twins. Moreover, if $t$ satisfies \eqref{DITn=9eq2} and  $(t-3)^{8}\in\mathcal{O}_{K}^{12}$,
    then $E_1$ and $E_3$ are discriminant twins.  
\end{theorem}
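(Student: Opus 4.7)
The plan is to reverse the local analysis of Theorem~\ref{n=9_if}, constructing at each prime $\fp$ of $\OK$ compatible $\fp$-minimal models of $E_1$ and $E_3$ whose discriminant valuations agree. Since $9$-isogenous curves automatically share the same conductor, by Definition~\ref{def:disctwins} the discriminant ideal twin claim reduces to showing $\nup(\Delta(E_{1,\fp}))=\nup(\Delta(E_{3,\fp}))$ at every $\fp$. Table~\ref{ta:invariants} gives
\[
\frac{\Delta_1}{\Delta_3}\;=\;\frac{(t-3)^8}{3^{24}},
\]
which after the preliminary reduction $[3^2,0,0,0]$ on $E_3$ (as in the proof of Theorem~\ref{n=9_if}) simplifies to a ratio of $(t-3)^8$; I will further twist $d$ locally by squares so that $\nup(d)\in\{0,1\}$ at each $\fp$.

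First I would dispatch the case $\ell:=\nup(3)=0$, where the hypothesis forces $k_\fp=0$ and hence $\nup(T_1)=0$. From Table~\ref{ta:invariants} it is then immediate that $\nup(\Delta_1)=\nup(\Delta_3)$ at $\fp$, since $T_2,\ldots,T_{11}$ contribute only to $c_4$ and $c_6$. A direct inspection of signatures, combined with \cite[Tableau~V]{Papadopoulos1993} when $\fp\mid 2$ or with the standard criterion ($\nup(\Delta)<12$ or $\nup(c_4)<4$) when $\fp\nmid 6$, confirms that the two curves admit a common minimizing transformation, so their $\fp$-minimal discriminants have equal valuation.

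The substantive case is $\ell\geq 1$ with $0\leq k_\fp\leq\ell$. Here I would tabulate $\text{sig}_\fp(E_1)$ and $\text{sig}_\fp(E_3)$ (with $E_3$ denoting the $[3^2,0,0,0]$-reduced model) directly from Table~\ref{ta:invariants}, using the Taylor expansions $T_m(t)=T_m(3)+T_m'(3)(t-3)+\cdots$ together with the telling evaluations $T_3(3)=27,\;T_4(3)=3,\;T_5(3)=3^8,\;T_9(3)=27,\;T_{11}(3)=3^6$ to control $\nup(T_m)$ in terms of $k_\fp$ and $\ell$. Applying $[\pi^{i+2k_\fp},0,0,0]$ to $E_1$ and $[\pi^{i},0,0,0]$ to $E_3$, with $i$ chosen so that the latter becomes $\fp$-integral and $\fp$-minimal, the shift $2k_\fp$ absorbs the excess $\nup((t-3)^8)=24k_\fp$ in the discriminant ratio; a sub-case check against \cite[Tableau~III]{Papadopoulos1993} confirms that the same $i$ simultaneously minimizes $E_1$, yielding equal $\fp$-valuations of the two minimal discriminants.

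For the discriminant twin claim, suppose additionally $(t-3)^8=\alpha^{12}$ with $\alpha\in\OK$. Then $\Delta_1=\alpha^{12}\Delta_3$ globally, and at each $\fp$ I would take a $\fp$-minimal $\OK$-integral model $(E_3)_\fp=[u_3,0,0,0]E_3$ and then apply $[u_3\alpha,0,0,0]$ to $E_1$. The exponent identity $\nup(u_1)=\nup(u_3)+2k_\fp=\nup(u_3\alpha)$ coming from the previous paragraph guarantees that $[u_3\alpha,0,0,0]E_1$ is also $\fp$-integral and $\fp$-minimal, and its discriminant equals $(u_3\alpha)^{-12}\Delta_1=u_3^{-12}\Delta_3=\Delta((E_3)_\fp)$ as an element of $\OK$, fulfilling Definition~\ref{def:disctwins}. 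The main obstacle throughout is the combinatorial case analysis in the $\ell\geq 1$ regime, where the valuations of $T_2,\ldots,T_{11}$ in terms of $k_\fp$, $\ell$, $\nup(2)$, and $\nup(d)$ must be tracked carefully through parallel applications of Papadopoulos's congruence tables.
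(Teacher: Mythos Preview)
Your proposal is correct and follows the same overall strategy as the paper: reduce $E_3$ by $[3^2,0,0,0]$, then compare signatures prime by prime using the tabulated valuations of the $T_m$ and Papadopoulos's tables, and for the discriminant-twin clause exploit that $\Delta_1/\Delta_3=(t-3)^8$ is a global twelfth power. The one noteworthy difference is in how the main case $\ell\geq 1$, $\nup(T_1)>0$ is dispatched. You propose to verify in every sub-case that the parallel shifts $[\pi^{i+2k_\fp},0,0,0]$ on $E_1$ and $[\pi^i,0,0,0]$ on $E_3$ yield simultaneously minimal models; the paper instead reads off from Table~\ref{Casesn=9} that $\nup(j_1)\geq 0$ and $\nup(j_3)\geq 0$ in every sub-case except $k_\fp=\ell/2$, and then invokes \cite[Lemma~4.4]{RNT1} together with $\nup(\Delta_1)\equiv\nup(\Delta_3)\pmod{12}$ to conclude equal minimal discriminant valuations in one stroke. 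Only the exceptional case $k_\fp=\ell/2$ is then handled by your direct shift argument, and there the two shifted signatures happen to coincide exactly. This shortcut is worth adopting: in several sub-cases (for instance $\ell/3<s<\ell/2$) the shifted signatures of $E_1$ and $E_3$ do \emph{not} agree, so your claim that ``the same $i$ simultaneously minimizes $E_1$'' via Tableau~III requires genuine case-by-case work that the lemma sidesteps. Your discriminant-twin argument matches the paper's in substance, though the paper phrases it as applying a unit isomorphism $[\kappa_\fp,0,0,0]$ (with $\kappa_\fp^{12}=\mu_\fp$) to an already $\fp$-minimal model $E_{1,\fp}$, which cleanly avoids having to re-verify integrality.
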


\begin{proof}
  Let $E_1$ and $E_3$ be two $9$-isogenous elliptic curves over a number field $K$. Suppose 
  that there exist $t \in \mathcal{O}_K$ and and $d\in \mathcal{O}_K$ such that $E_i \cong \mathcal{C}_{9, i}(t, d)$ with $t$ satisfying 
    \begin{align*}
        \nup(t-3) = 3k_\p=s_\fp,
  \end{align*}
  for some integer $0 \leq k_\p \leq \nup(3)$, for all primes $\fp$ of $\mathcal{O}_K$. Using the definitions of the invariants of the curve $E_3$ and \cite[Tableau III]{Papadopoulos1993}, we note that we can reduce $E_3$ at primes $\p$ above $3$. Thus, replace $E_3$ by $\tau_3 E_3$ where $\tau_3 = [3^2, 0, 0, 0]$. Let $E_{i,\fp}$ is a minimal model of $E_i$ at $\fp$.

Let $\p$ be a prime of $\mathcal{O}_K$. Let $\delta = \nup(d) \geq 0$. Note that we do not assume that $\delta \in \{ 0, 1 \}$. If $\nup(T_{1}) = \nup(t-3) > 0$, then by (\ref{DITn=9eq2}) we have $\nup(3) > 0$. In order to prove our statement, we consider all the cases given in Table~\ref{Casesn=9}. This table computes the valuations of the polynomials $T_m$ given in Table~\ref{ta:polyTi}, deriving the signatures and the valuation of the $j$-invariants of the two curves.
Note that the $j$-invariants satisfy $\nup(j_1) \geq 0$ and $\nup(j_3)\geq 0$ for all the cases in Table~\ref{Casesn=9} except for the case when $k_\fp=\ell/2$. Moreover, in all these cases, $\nup(\Delta_1)=\nup(\Delta_3)+24k_\fp$, 
i.e., $\nup(\Delta_1)\equiv\nup(\Delta_3) \mod {12}$. Then by \cite[Lemma 4.4]{RNT1}, we conclude that the $\nup(\Delta(E_{1, \p})) = \nup(\Delta(E_{3,\p}))$.

Next, we assume $k_\fp = \ell/2$. Let $\delta = \nup(d)$. 
Then, from Table~\ref{Casesn=9},
        \begin{center}
        \begin{tabular}{ l l l }
            $\nu_\fp(c_{4, 1}) = 8\ell+2\delta,$ & $\nu_\fp(c_{6, 1}) = 12\ell+3\delta,$ & $\nu_\fp(\Delta_1) = 9\ell+6\delta+\delta_3 +9s_\fp,$ \\
            
            $\nu_\fp(c_{4, 3}) = 4\ell+2\delta,$ & $\nu_\fp(c_{6, 3}) = 6\ell+3\delta,$ & $\nu_\fp(\Delta_3)= 9\ell+6\delta+\delta_3 +  s_\fp$.
        \end{tabular}
        \end{center}
    Here, $\delta_3$ is a non-negative integer. If we reduce the curve $E_3$ by $\tau_3 = [\fp^i, 0, 0, 0]$ and the curve $E_1$ by $\tau_1 =[\fp^{i+2 k_\fp}, 0, 0, 0]$ (for some integer $i\geq0$),
    then the new reduced curves will have equal signatures, given as follows: 
       $$(4\ell+2\delta-4i,\quad 6\ell+3\delta-6i,\quad 9\ell+6\delta+\delta_3 +  s_\fp-12i).$$ 
    Consider the value of $i$ for which $\tau_3 E_3$ is minimal. Then $\tau_1 E_1$ is also minimal (since they have the same signature). We conclude that $\nup(\Delta(E_{1, \fp})) = \nup(\Delta(E_{3,\fp}))$ in this case. 

Lastly, we assume that $\nup(T_{1}) = \nup(t-3) = 0$. We have the following cases: 
\begin{enumerate}
  \item[(a)] Assume $\nup(3) \neq 0$. In this case, the curves $E_1$ and $E_3$ have the same signature, so they reduce in the same way. It follows that $\nup(\Delta(E_{1, \p})) = \nup(\Delta(E_{3,\p}))$. 
  \item[(b)] Assume $\nup(3) = 0$ and $\nup(T_{3}) \neq 0$. Again, in this case, the curves $E_1$ and $E_3$ have the same signature, so they reduce in the same way. It follows that $\nup(\Delta(E_{1, \p})) = \nup(\Delta(E_{3,\p}))$. 
  \item[(c)] Assume $\nup(3) = 0$ and $\nup(T_{3}) = 0$.  
  If $\nup(2) = 0$, then $E_1$ and $E_3$ are already minimal at $\p$. If $\nup(2) \neq 0$, then using \cite[Tableau V]{Papadopoulos1993}, we can see that $E_1$ and $E_3$ are minimal at $\p$. Thus, $\nup(\Delta(E_{1, \p})) = \nup(\Delta(E_{3,\p}))$. 
\end{enumerate}

We conclude that $E_1$ and $E_3$ are discriminant ideal twins when $t$ satisfies \eqref{DITn=9eq2}.

It remains to show that if $t$ satisfies \eqref{DITn=9eq2} and $(t-3)^8\in\mathcal{O}_{K}^{12}$, then
$E_{1}$ and $E_{3}$ are discriminant twins. 
To this end, for a prime
$\mathfrak{p}$ of $K$, 
let $E_{i,\mathfrak{p}}$ be a $\mathfrak{p}$-minimal model of
$E_{i}$. 
By \cite[Proposition VII.1.3]{Silverman1994} there
exists $u_{i,\mathfrak{p}}\in\mathcal{O}_{K}$ such that $u_{i,\mathfrak{p}}^{-12}\Delta_i
=\Delta(E_{i,\mathfrak{p}})$. Moreover, since $E_{1}$
and $E_{3}$ are discriminant ideal twins (as they satisfy \eqref{DITn=9eq2}), it is the case that there is a
$\mu_{\mathfrak{p}}\in K$ with $\nup(\mu_{\mathfrak{p}})=0$ such that $\Delta(E_{1,\mathfrak{p}})=\mu_{\mathfrak{p}
}\Delta(E_{3,\mathfrak{p}})$. It follows that:
\[
(t-3)^8=\frac{\Delta_1}{\Delta_3}=\mu_\fp\left(
\frac{u_{1,\fp}}{u_{3,\fp}}\right)^{12}%
\qquad\Longrightarrow\qquad\mu_{\mathfrak{p}}=(t-3)^{8}\left(
\frac{u_{3,\fp}}{u_{1,\fp}}\right)^{12}.
\]
By our assumption, we get $\mu
_{\mathfrak{p}}\in\mathcal{O}_{K}^{12}$. In particular, there is a
$\kappa_{\mathfrak{p}}\in\mathcal{O}_{K}$ such that $\kappa_{\mathfrak{p}%
}^{12}=\mu_{\mathfrak{p}}$ and $\nu_{\mathfrak{p}}(\kappa_{\mathfrak{p}})=0$.
Now let $E_{1,\mathfrak{p}}^{\prime}$ be the elliptic curve obtained from
$E_{1,\mathfrak{p}}$ via the isomorphism $[\kappa_\fp,0,0,0]$. Then $E_{1,\mathfrak{p}}^\prime$ is an $\mathfrak{p}$-minimal
model  with $\Delta(E_{1,\mathfrak{p}}^{\prime}%
)=\kappa_{\mathfrak{p}}^{-12}\Delta(E_{1,\mathfrak{p}})$. Hence $\Delta
(E_{1,\mathfrak{p}})=\mu_{\mathfrak{p}}\Delta(E_{1,\mathfrak{p}}^{\prime})$,
which, in turn, yields that $\Delta(E_{1,\mathfrak{p}}^{\prime})=\Delta
(E_{3,\mathfrak{p}})$. This shows that for each $\mathfrak{p}$, there are
$\mathfrak{p}$-minimal models of $E_{1}$ and $E_{3}$ having equal
discriminants. Therefore $E_{1}$ and $E_{3}$ are discriminant twins.
\end{proof}

\section{\texorpdfstring{Proof for $n=25$}{}}\label{proof_for_n=25}

In this section, we parameterize all $25$-isogenous discriminant ideal twins over number fields that do not share the same $j$-invariant.
Again, as we have already classified all $25$-isogenous discriminant ideal twins over number fields that do share the same $j$-invariant, this completes the classification.
Our main tool is the $25$-isogeny elliptic curve parameterization due to Barrios \cite{Bariso}.  
Consider the polynomials $S_{m}$ for $1\leq m \leq 14$ in Table \ref{ta:polyTi}. Then for $i=1,3$ we can parameterize the two $25$-isogenous curves as in Table \ref{ta:curves}. 
Moreover, we can define the invariants $j_{i}$, $\Delta_i$, $c_{4,i}$, and $c_{6,i}$ as in Table \ref{ta:invariants}.

\begin{theorem}\label{n=25_if}
    Let $E_1$ and $E_3$ be two $25$-isogenous elliptic curves over $K$ with the property that their $j$-invariants are not the same. If $E_1$ and $E_3$ are discriminant ideal twins then there exists $t\in \mathcal{O}_K$ and $d\in \mathcal{O}_K$ such that $E_i \cong \mathcal{C}_{25, i}(t,d)$ and for each prime
$\mathfrak{p}$ of $K$, $t$ satifies
    \begin{equation}\label{eq1:DT=25}
        \nup(t-1) = k_\p,\qquad 0\leq k_\p \leq \nup(5).
    \end{equation}
\end{theorem}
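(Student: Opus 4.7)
The plan is to mirror the proof of Theorem~\ref{n=9_if}, with $p=5$ taking over the role played by $p=3$. First, Lemma~\ref{Lem:tisOKint} supplies $t, d \in \mathcal{O}_K$ with $E_i \cong \mathcal{C}_{25,i}(t,d)$, so we may work with the integral parametric Weierstrass models $y^2 = x^3 + d^2 A_{25,i}(t)\,x + d^3 B_{25,i}(t)$ from Table~\ref{ta:curves}.

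Next I would globally normalize $E_3$. Inspection of Table~\ref{ta:invariants} shows that at every prime $\mathfrak{p}$ with $\ell := \nu_\mathfrak{p}(5) \geq 1$, the signature of $E_3$ is at least $(8\ell, 12\ell, 24\ell)$, so the model is non-minimal at each such $\mathfrak{p}$ by the appropriate table in \cite{Papadopoulos1993}. The admissible change of variables $\tau_3 = [5^2, 0, 0, 0]$ shifts the signature by $(-8\ell, -12\ell, -24\ell)$ at every prime, removing the non-minimality while leaving the $\mathfrak{p}$-signature unchanged at primes away from $5$. Replacing $E_3$ by $\tau_3 E_3$, the ratio of global discriminants simplifies to
\[
\frac{\Delta_1}{\Delta_3} \;=\; S_3^{24} \;=\; (t-1)^{24},
\]
which plays exactly the role that $(t-3)^8$ played in the $n=9$ analysis.

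Working locally, for each prime $\mathfrak{p}$ take $\mathfrak{p}$-minimal integral models $E_{i,\mathfrak{p}}$ of $E_i$. By \cite[Proposition VII.1.3]{Silverman2009} there exist $u_{i,\mathfrak{p}} \in \mathcal{O}_K$ with $u_{i,\mathfrak{p}}^{-12}\Delta_i = \Delta(E_{i,\mathfrak{p}})$, and the twin hypothesis furnishes $\mu_\mathfrak{p} \in K$ with $\nu_\mathfrak{p}(\mu_\mathfrak{p}) = 0$ such that $\Delta(E_{1,\mathfrak{p}}) = \mu_\mathfrak{p}\Delta(E_{3,\mathfrak{p}})$, whence
\[
(t-1)^{24} \;=\; \mu_\mathfrak{p}\left(\frac{u_{1,\mathfrak{p}}}{u_{3,\mathfrak{p}}}\right)^{12}.
\]
Writing $s_\mathfrak{p} := \nu_\mathfrak{p}(t-1) \geq 0$ and applying $\nu_\mathfrak{p}$ yields only $\nu_\mathfrak{p}(u_{1,\mathfrak{p}}/u_{3,\mathfrak{p}}) = 2s_\mathfrak{p}$, which is automatic; unlike the $n=9$ case, no divisibility condition on $s_\mathfrak{p}$ drops out. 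Setting $k_\mathfrak{p} = s_\mathfrak{p}$, the remaining content of the theorem is the upper bound $k_\mathfrak{p} \leq \ell$.

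The main obstacle is the signature-driven case analysis establishing $k_\mathfrak{p} \leq \ell$; it requires a fresh analogue of Table~\ref{Casesn=9} for $n=25$. As in the $n=9$ proof, I would first absorb the twist parameter by writing $d = d_\mathfrak{p} \pi^{2 i_\mathfrak{p}}$ with $\delta_\mathfrak{p} := \nu_\mathfrak{p}(d_\mathfrak{p}) \in \{0, 1\}$ and passing to the $\mathfrak{p}$-integral $K$-isomorphic model with twist parameter $d_\mathfrak{p}$. Then I would split on $\ell$. For $\ell = 0$, supposing $k_\mathfrak{p} \geq 1$, direct evaluation shows $S_j(1)$ is a nonzero power of $5$ for $j \neq 3, 9$, and $S_9(1) = -1$, so away from $5$ every $S_j(t)$-contribution to the signatures is trivial except that of $S_3$; the two signatures agree in $c_4$ and $c_6$ while $\nu_\mathfrak{p}(\Delta_1) = \nu_\mathfrak{p}(\Delta_3) + 24 k_\mathfrak{p}$, and the relevant Papadopoulos tableau confirms that any common further reduction forced by $\nu_\mathfrak{p}(2), \nu_\mathfrak{p}(3)$ preserves this positive gap, contradicting the twin hypothesis. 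For $\ell \geq 1$, supposing $k_\mathfrak{p} > \ell$, I would tabulate $\nu_\mathfrak{p}(S_j)$ in terms of $s_\mathfrak{p}$ and $\ell$, choose matched transformations $\tau_i = [\pi^{\alpha_i},0,0,0]$ placing both $\tau_i E_i$ in $\mathfrak{p}$-minimal form, and verify via Papadopoulos that the residual gap $\nu_\mathfrak{p}(\Delta(E_{1,\mathfrak{p}})) - \nu_\mathfrak{p}(\Delta(E_{3,\mathfrak{p}})) = 24(k_\mathfrak{p} - \ell) > 0$ persists. The main technical difficulty is the bookkeeping of $S_j$-valuations across the various subcases of $s_\mathfrak{p}$ versus $\ell$.
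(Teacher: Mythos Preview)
Your proposal is correct and follows essentially the same approach as the paper: the same global normalization of $E_3$ by $[5^2,0,0,0]$, the same discriminant-ratio identity $(t-1)^{24}=\mu_\mathfrak{p}(u_{1,\mathfrak{p}}/u_{3,\mathfrak{p}})^{12}$, the same local reduction of $d$ to $\delta_\mathfrak{p}\in\{0,1\}$, and the same contradiction via matched transformations (the paper uses $\tau_1=[\pi^{i+2\ell},0,0,0]$, $\tau_3=[\pi^i,0,0,0]$ in the $\ell\geq 1$ case, yielding exactly your gap $24(k_\mathfrak{p}-\ell)$, and the analogous pair with $2k_\mathfrak{p}$ in place of $2\ell$ for the $\nu_\mathfrak{p}(3)>0$ subcase of $\ell=0$). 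The only minor imprecision is your phrase ``common further reduction'' in the $\ell=0$, $\nu_\mathfrak{p}(3)>0$ subcase---the two curves are in fact reduced by different amounts there---but your later paragraph makes clear you already have the matched-transformation idea in hand.
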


\begin{proof}
By Lemma \ref{Lem:tisOKint}, there exist $t\in \mathcal{O}_K$ and $d\in \mathcal{O}_K$  such that $E_{i} \cong \mathcal{C}_{p^2,i}(t,d)$, and thus $E_{i}$ is given by an $\mathcal{O}_{K}$-integral model.
Now let
$\mathfrak{D}_{i}^{\text{min}}$ denote the minimal discriminant ideal of
$E_{i}$. Then $\mathfrak{D}_{1}^{\text{min}}=\mathfrak{D}_{3}^{\text{min}}$
since $E_{1}$ and $E_{3}$ are discriminant ideal twins.

Note that the curve $E_3$ is not minimal at the primes $\p$ above $5$, so we can make the change of variables $\tau_3 = [5^2, 0, 0, 0]$ to reduce this curve. From hereon, replace $E_3$ by $\tau_3 E_3$. 

For any prime $\mathfrak{p}$, let  $E_{i,\mathfrak{p}}$ denote a local minimal model of $E_{i}$ at $\mathfrak{p}$. Suppose further that $E_{i,\fp}$ is $\mathcal{O}_{K}$-integral.
Consequently, $\nu_{\mathfrak{p}%
}(\Delta(E_{1,\mathfrak{p}}))=\nu_{\mathfrak{p}}(\Delta(E_{3,\mathfrak{p}}))$.
In particular, there exists $\mu_{\mathfrak{p}}\in K$ such that
$\nu_{\mathfrak{p}}(\mu_{\mathfrak{p}})=0$ and $\Delta(E_{1,\mathfrak{p}}%
)=\mu_{\mathfrak{p}}\Delta(E_{3,\mathfrak{p}})$. By
\cite[Proposition VII.1.3]{Silverman1994}, there exists $u_{i,\mathfrak{p}}%
\in\mathcal{O}_{K}$ such that $u_{i,\mathfrak{p}}^{-12}\Delta_i
=\Delta(E_{i,\mathfrak{p}})$. Consequently, $u_{1,\mathfrak{p}
}^{-12}\Delta_1=\mu_{\mathfrak{p}}u_{2,\mathfrak{p}}^{-12}\Delta_3$. Then 
\begin{equation}\label{mueqn}
(t-1)^{24}=\frac{\Delta_1}{\Delta_3}=
\mu_{\mathfrak{p}}\left(
\frac{u_{1,\mathfrak{p}}}{u_{3,\mathfrak{p}}}\right)  ^{12}.
\end{equation}
Note that if $E_1$ and $E_3$ are discriminant twins, then $\mu_\p = 1$. 

Let $\nup(S_{3}) = \nup(t-1) = k_\fp\geq 0$ and $\nup(5)\geq 0$. Suppose first that $E_1$ and $E_3$ are discriminant ideal twins. Applying $\nup(\cdot)$ to both sides of \eqref{mueqn} implies that
\begin{equation*}
2k_p=2\nup(t-1) = \nup(u_{1, \p}/u_{3, \p}).
\end{equation*}
Now, we will show that $\nup(t-1)\leq \nup(5)$. For this we will investigate possible values of $\nup(u_{1, \p}/u_{3, \p})$ over all primes $\fp$ of $\mathcal{O}_K$.

As in the proof of Theorem~\ref{n=9_if}, we may write $d = d_{\p}\pi^{2i_{\p}}$ for an uniformizer $\pi \in \p \setminus \p^2$ for $R_\fp$ and $\delta_\p = \nup(d_\p) \in \{0, 1\}$. Just as in the proof of Theorem~\ref{n=9_if}, from hereon we may replace $E_i$ by the isomorphic $\p$-integral model $E_i^\prime$ using the change of variables $[\pi^{i_\p},0,0,0]$. From hereon, we can thus replace $\delta = \nup(d)$ by $\delta_\fp \in \{0, 1\}$.

If $\nup(\Delta_3) = 0$, then $\nup(\Delta_1) = 0$. So in this case $\nup(u_{1, \p}/u_{3, \p}) = 0$ and $k_\p=0$. We can therefore assume that $\nup(\Delta_3) > 0$.

We have two cases, depending on whether or not $\nup(5) = 0$. Firstly, assume that $\nup(5) = 0$. We want to show that $k_\fp=0$. Towards the contradiction, let us assume $k_\fp\neq0$. Then $\nu_\fp(S_{1}) = \nu_\fp(S_{2}) = \nu_\fp(S_{4}) = \nu_\fp(S_{5}) = \nu_\fp(S_{6}) = \nu_\fp(S_{7}) = \nu_\fp(S_{8}) = \nu_\fp(S_{9}) = 0$. Then the signatures of the two curves are: 
\begin{align*}
 &\text{sig}_{\fp}(E_1)=(4\nup(2)+4\nup(3)+2{\delta_\p}, 6\nup(2)+6\nup(3)+3{\delta_\p}, 12\nup(2)+12\nup(3)+6{\delta_\p} + 25k_\fp),\\   
&\text{sig}_{\fp}(E_3)=(4\nup(2)+4\nup(3)+2{\delta_\p}, 6\nup(2)+6\nup(3)+3{\delta_\p},12\nup(2)+12\nup(3)+6{\delta_\p} + k_\fp).
\end{align*}
If  $\nup(2)=\nup(3)=0$, then $\nu_\fp(c_{4, 1}) = \nu_\fp(c_{4,3}) = 2{\delta_p}$, so $E_1$ and $E_3$ are minimal at primes $\fp$ with $\nup(2)=\nup(3)=\nup(5)=0$.
If $\nup(2)>0$, then using \cite[Tableau V]{Papadopoulos1993}, we observe that the models of $E_1$ an $E_3$ are minimal at $\fp$ in this case as well.
If $\nup(3)>0$, then $\nu_\fp(\Delta_1) = \nu_\fp(\Delta_3)+24k_\fp$, $\nu_\fp(c_{4,1}) = \nu_\fp(c_{4,3})$, and $\nu_\fp(c_{6,1}) = \nu_\fp(c_{6,3})$.
Note that if we reduce the curve $E_3$ by $\tau_3 = [\fp^i, 0, 0, 0]$ and $E_1$ by $\tau_1 = [\fp^{i+2k_\fp}, 0, 0, 0]$ (for some integer $i \geq 0$), we obtain two new curves with the property that $\tau_1 E_1$ is an integral model if and only if $\tau_3 E_3$ is an integral model. Choose $i$ for which $\tau_3 E_3$ is a minimal model. By \cite[Tableau III]{Papadopoulos1993}, we observe that $\tau_1E_1$ is also a minimal model. In all these cases, we observe that the minimal models satisfy the fact that $\nup(\Delta(E_{1, \fp})) = \nup(\Delta(E_{3, \fp})) + 24k_\fp$, so the two curves cannot be discriminant ideal twins (since $k_\fp \neq 0$), which is a contradiction. 

Now, assume that $\ell=\nup(5) > 0$. In this case, $k_\fp\geq 0$. We want to show that 
$k_\fp\leq \ell$. Towards the contradiction, let us assume that $\ell < k_\fp$. Then by Table~\ref{Casesn=25},
\begin{center}
                \begin{tabular}{ l l l }
                    $\nu_\fp(c_{4, 1}) = 10\ell+2{\delta_\p},$ & $\nu_\fp(c_{6, 1}) = 15\ell+3{\delta_\p},$ & $\nu_\fp(\Delta_1) = 5\ell+6{\delta_\p}+25k_\fp,$ \\ 
                    $\nu_\fp(c_{4, 3}) = 2\ell+2{\delta_\p},$ & $\nu_\fp(c_{6, 3}) = 3\ell+3{\delta_\p},$ & $\nu_\fp(\Delta_3) = 5\ell+6{\delta_\p}+k_\fp$. 
                \end{tabular}
                \end{center}

Note that, in order to obtain minimal models, $E_3$ can be reduced by $\tau_3 = [\p^i, 0, 0, 0]$ if and only if $E_1$ can be reduced by $\tau_1 = [\p^{i + 2\ell}, 0, 0, 0]$ for some $i\geq 0$. In this case the minimal discriminants of $E_1$ and $E_3$ are not equal, since $k_\p\neq 0$. This is a contradiction. Hence we obtain that $t$ satisfies
\begin{equation*}
  \nup(t-1) = k_\p, \quad \text{ for } 0\leq k_\p \leq \nup(5).  
\end{equation*}
 \end{proof}

\begin{theorem}\label{n=25_onlyif}
    Let $E_1$ and $E_3$ be two $25$-isogenous elliptic curves over $K$ such that their $j$-invariants are not the same. If there exists $t \in \mathcal{O}_K$ and $d \in \mathcal{O}_K$  such that $E_{i}$ is
$K$-isomorphic to the elliptic curve $\mathcal{C}_{25, i}(t, d)$, and for
each prime $\mathfrak{p}$ of $\mathcal{O}_K$,
    \begin{equation}
    \label{eq3:25}
        \nup(t-1) = k_\p,\qquad 0 \leq k_\p \leq \nup(5),
    \end{equation}
    then $E_1$ and $E_3$ are discriminant ideal twins. Moreover, $E_1$ and $E_3$ are discriminant twins. 
\end{theorem}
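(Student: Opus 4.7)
The plan is to follow the structure of the proof of Theorem~\ref{n=9_onlyif}, adapted to the $n=25$ setting. First I would reduce $E_3$ at primes above $5$ via $\tau_3 = [5^2, 0, 0, 0]$, since inspection of the signature of $E_3$ in Table~\ref{ta:invariants} together with \cite[Tableau III]{Papadopoulos1993} shows the original model is non-minimal at such primes. I would also normalize $d$ locally, as in the proof of Theorem~\ref{n=25_if}, by replacing $d$ with $d_\fp$ satisfying $\nup(d_\fp) \in \{0,1\}$ via a change of variables $[\pi^{i_\fp}, 0, 0, 0]$ that shifts the $\fp$-signature by a known amount.

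Next, at each prime $\fp$, I would run a case analysis driven by $k_\fp = \nup(t-1)$ and $\ell = \nup(5)$. When $\nup(5) = 0$ the hypothesis forces $k_\fp = 0$, so $\nup(S_3) = 0$, and one verifies directly that the signatures of $E_1$ and $E_3$ agree (invoking \cite[Tableau V]{Papadopoulos1993} when $\nup(2) > 0$), yielding equal minimal discriminants. When $\ell = \nup(5) > 0$ and $0 \leq k_\fp \leq \ell$, I would consult Table~\ref{Casesn=25} to read off $\nup(S_m)$ in each subcase and thence the signatures of $E_1$ and $E_3$. In each subcase an integer $i \geq 0$ can be chosen so that $\tau_3 = [\pi^i, 0, 0, 0]$ produces a $\fp$-minimal model of $E_3$; the coupled transformation $\tau_1 = [\pi^{i+2\ell}, 0, 0, 0]$ simultaneously yields a $\fp$-minimal $E_1$, and the valuations read off from the table force $\nup(\Delta(E_{1,\fp})) = \nup(\Delta(E_{3,\fp}))$.

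For the discriminant twin conclusion, I would exploit the identity read off Table~\ref{ta:invariants} after the initial reduction of $E_3$, namely
\[
\frac{\Delta_1}{\Delta_3} = (t-1)^{24} = \bigl((t-1)^2\bigr)^{12},
\]
so $\Delta_1/\Delta_3$ is automatically a twelfth power of an element of $\mathcal{O}_K$. This is the crucial simplification compared with the $n=9$ case, where Grunwald--Wang was required to lift local twelfth powers to a global one. Mimicking the concluding argument of Theorem~\ref{n=9_onlyif}, at each prime $\fp$ the unit $\mu_\fp$ with $\Delta(E_{1,\fp}) = \mu_\fp \Delta(E_{3,\fp})$ satisfies $\mu_\fp = (t-1)^{24}(u_{3,\fp}/u_{1,\fp})^{12}$, so $\kappa_\fp := (t-1)^2 \, u_{3,\fp}/u_{1,\fp}$ satisfies $\kappa_\fp^{12} = \mu_\fp$ with $\nup(\kappa_\fp) = 0$; applying $[\kappa_\fp, 0, 0, 0]$ to $E_{1,\fp}$ produces a $\fp$-minimal model whose discriminant equals $\Delta(E_{3,\fp})$.

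The main obstacle I anticipate is the bookkeeping in the case-analysis table: the polynomials $S_1,\ldots,S_{14}$ each need their $\fp$-valuations computed when $\fp$ lies above $5$, and care is needed in boundary cases such as $k_\fp = \ell$ and small $k_\fp$, where several $S_m$'s may simultaneously acquire positive valuation. Once those entries are verified, the remainder is a faithful translation of the $n=9$ proof, streamlined by the fact that the exponent $24$ in $\Delta_1/\Delta_3$ obviates any appeal to Grunwald--Wang and forces discriminant twinning outright.
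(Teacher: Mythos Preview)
Your outline tracks the paper's proof closely, but the coupled transformation you propose is wrong: the extra shift on $E_1$ must be by $2k_\fp$, i.e.\ $\tau_1 = [\pi^{i+2k_\fp}, 0, 0, 0]$, not $[\pi^{i+2\ell}, 0, 0, 0]$. After the initial $[5^2,0,0,0]$ reduction of $E_3$ one has $\nup(\Delta_1) - \nup(\Delta_3) = 24k_\fp$, so equalizing the minimal discriminants requires absorbing exactly $24k_\fp$, hence an extra factor of $\pi^{2k_\fp}$ in $u_{1,\fp}$. Shifting by $2\ell$ instead over-reduces $E_1$ whenever $k_\fp < \ell$: for instance in the range $10k_\fp < \ell$ of Table~\ref{Casesn=25} one has $\nup(c_{4,1}) = 11k_\fp + 2\delta$, so $\nup(c_{4,1}) - 8\ell < 0$ and the resulting model is not even $\fp$-integral. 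You have likely imported the $2\ell$ shift from the proof of Theorem~\ref{n=25_if}, where it is used in the regime $\ell < k_\fp$ to derive a contradiction, not to exhibit twins.

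A second gap: your case split is $\nup(5) = 0$ versus $\nup(5) > 0$, and in the latter you invoke Table~\ref{Casesn=25} for all $0 \le k_\fp \le \ell$. But that table is compiled under the standing hypothesis $k > 0$ (see the appendix preamble), so the case $k_\fp = 0$ with $\nup(5) > 0$ falls outside its scope. The paper instead organizes the argument as $k_\fp > 0$ (where the table applies) versus $k_\fp = 0$, and in the latter runs a separate subcase analysis governed by $\nup(S_1)$ and $\nup(S_2)$, together with the residual primes above $2$ and $3$, checking in each that the two curves reduce identically. Your discriminant-twin conclusion, exploiting that $(t-1)^{24}$ is automatically a global twelfth power so no Grunwald--Wang step is needed, is correct and matches the paper.
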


\begin{proof}
    Let $E_1$ and $E_3$ be two $25$-isogenous elliptic curves over a number field $K$. Assume moreover that there exist $t \in \mathcal{O}_K$ and $d \in \mathcal{O}_K$ such that $E_i \cong \mathcal{C}_{25, i}(t, d)$ with $t$ satisfying the equation 
    \begin{equation*}
        \nup(t-1) = k_\p,
    \end{equation*}
    for some integer $0 \leq k_\p \leq \nu_\fp(5)$. The curve $E_3$ is not minimal at the primes dividing $5$, so we can replace $E_3$ by $\tau_3 E_3$, where $\tau_3 = [5^2, 0, 0, 0]$. Let $E_{i,\p}$ be a local minimal model of $E_i$ at $\fp$. 

Let $\p$ be a prime of $K$. If $k_\p=\nu_\fp(S_{3}) > 0$, then $\nup(5)>0$. In order to prove the statement, we consider the cases given in Table~\ref{Casesn=25}. This table computes the signatures of the curves $E_1$ and $E_3$, using the valuations of the polynomials $S_j$ in Table~\ref{ta:polyTi}. By
looking at the signatures in each case of Table~\ref{Casesn=25}, we observe that in order to get minimal models $E_{1,\fp}$ and $E_{3,\fp}$, the curve $E_3$ can be reduced by $\tau_3 = [\p^i, 0, 0, 0]$ if and only if the curve $E_1$ can be reduced by $\tau_1 = [\p^{i+2k_\fp}, 0, 0, 0]$. 
Then, using the fact that $\nu_\fp(\Delta_1) = 24k_\fp + \nu_\fp(\Delta_3)$,  we conclude that the $\nup(\Delta(E_{1,\p})) = \nup(\Delta(E_{3,\p}))$. 

If $\nu_\fp(S_{3})=k_\p= 0$, then we have the following cases: 
\begin{enumerate}
    \item[(a)] Assume $\nup(S_{3})=0$ and $\nup(S_{2})>0$. If $\nup(2)=\nup(3)=\nup(5)=0$, then the two curves are already reduced at $\p$, so $\nup(\Delta(E_{1,\p})) = \nup(\Delta(E_{3,\p}))$. It remains to consider the other cases. 
    \begin{enumerate}
        \item[(i)] Assume $\nup(2)>0$ or $\nup(3)>0$. In this case, $\nu_\fp(c_{4, 1}) = \nu_\fp(v_{4,3})$, $\nu_\fp(c_{6,1}) = \nu_\fp(c_{6,3})$ and $\nu_\fp(\Delta_1) = \nu_\fp(\Delta_3)$, so the two curves reduce in the same way. Thus, $\nup(\Delta(E_{1,\p})) = \nup(\Delta(E_{3,\p}))$. 
        \item[(ii)] Assume $\nup(5)>0$. Then $\nu_\fp(\Delta_1) = \nu_\fp(\Delta_3) = 3\nu_\fp(S_{2})$, $\nu_\fp(c_{4,1}) = \nu_\fp(c_{4,3}) = \nu_\fp(S_{2})$, and $\nu_\fp(c_{6,1}) \geq 2\nu_\fp(S_{2})$, $\nu_\fp(c_{6,3}) \geq 2\nu_\fp(S_{2})$, so the two curves reduce in the same way. Thus, $\nup(\Delta(E_{1,\p})) = \nup(\Delta(E_{3,\p}))$. 
    \end{enumerate}
    \item[(b)] Assume $\nup(S_{3})=\nup(S_{2})=0$ and $\nup(S_{1})>0$. If $\nup(5)=0$, then the two curves are minimal at $\p$. On the other hand, if $\nup(5)>0$, then $\nup(S_{3})>0$, reaching a contradiction. Thus, $\nup(\Delta(E_{1,\p})) = \nup(\Delta(E_{3,\p}))$. 
    \item[(c)] Assume $\nup(S_{1})=\nup(S_{2})=\nup(S_{3})=0$, but $\nup(2)>0$ or $\nup(3)>0$. In this case, the two curves are minimal at $\fp$ and $\nup(\Delta(E_{1,\p})) = \nup(\Delta(E_{3,\p}))$.
\end{enumerate}
Hence, we obtained that $E_1$ and $E_3$ are discriminant ideal twins when \eqref{eq3:25}
is satisfied. 

It remains to show that 
$E_{1}$ and $E_{3}$ are discriminant twins. 
To this end, for a prime
$\mathfrak{p}$ of $K$, 
let $E_{i,\mathfrak{p}}$ be a $\mathfrak{p}$-minimal model of
$E_{i}$. 
By \cite[Proposition VII.1.3]{Silverman1994}, there
exists $u_{i,\mathfrak{p}}\in\mathcal{O}_{K}$ such that $u_{i,\mathfrak{p}}^{-12}\Delta_i
=\Delta(E_{i,\mathfrak{p}})$. Moreover, since $E_{1}$
and $E_{3}$ are discriminant ideal twins it is the case that there is a $\mu_{\mathfrak{p}}\in K$
such that $\nu_{\mathfrak{p}}
(\mu_{\mathfrak{p}})=0$ and $\Delta(E_{1,\mathfrak{p}})=\mu_{\mathfrak{p}
}\Delta(E_{3,\mathfrak{p}})$. It follows that
\[
(t-1)^{24}=\frac{\Delta_1}{\Delta_3}=\mu_\fp\left(
\frac{u_{1,\fp}}{u_{3,\fp}}\right)^{12}%
\qquad\Longrightarrow\qquad\mu_{\mathfrak{p}}=(t-1)^{24}\left(
\frac{u_{3,\fp}}{u_{1,\fp}}\right)^{12}.
\]
Note that $\mu
_{\mathfrak{p}}\in\mathcal{O}_{K}^{12}$ since $(t-1)^{24}\in\mathcal{O}_{K}^{12}$. In particular, there is a
$\kappa_{\mathfrak{p}}\in\mathcal{O}_{K}$ such that $\kappa_{\mathfrak{p}%
}^{12}=\mu_{\mathfrak{p}}$ and $\nu_{\mathfrak{p}}(\kappa_{\mathfrak{p}})=0$.
Now let $E_{1,\mathfrak{p}}^{\prime}$ be the elliptic curve obtained from
$E_{1,\mathfrak{p}}$ via the isomorphism $[\kappa_\fp,0,0,0]$. Then $E_{1,\mathfrak{p}}^\prime$ is a $\mathfrak{p}$-minimal
model  with $\Delta(E_{1,\mathfrak{p}}^{\prime}%
)=\kappa_{\mathfrak{p}}^{-12}\Delta(E_{1,\mathfrak{p}})$. Hence $\Delta
(E_{1,\mathfrak{p}})=\mu_{\mathfrak{p}}\Delta(E_{1,\mathfrak{p}}^{\prime})$,
which, in turn, yields that $\Delta(E_{1,\mathfrak{p}}^{\prime})=\Delta
(E_{3,\mathfrak{p}})$. This shows that for each $\mathfrak{p}$, there are
$\mathfrak{p}$-minimal models of $E_{1}$ and $E_{3}$ having equal
discriminants. Therefore $E_{1}$ and $E_{3}$ are discriminant twins.
\end{proof}

\section{Explicit results over \texorpdfstring{$\mathbb{Q}$}{Q}
}\label{sec:Examples}

\begin{proposition}
\label{classinQ}Up to twists, there are finitely many $p^2$-isogenous
discriminant ideal twins over~$\mathbb{Q}$, for odd $p$ for which $X_0(p^2)$ has genus $0$. These elliptic curves, $E_{1}$ and $E_{3}$, are given in Table~\ref{ta:clasoverQ}
by their LMFDB label. The table also lists their $j$-invariant and minimal discriminant.
\end{proposition}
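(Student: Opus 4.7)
The plan is to specialize Theorems~\ref{n=9_iff} and~\ref{n=25_iff} to $K=\mathbb{Q}$ and exploit the fact that over $\mathbb{Z}$ the divisibility conditions on $t$ collapse to finitely many values. Concretely, for $p=3$ the only rational prime with $\nu_{\fp}(3)>0$ is $3$ itself, with $\nu_3(3)=1$. The condition $\nu_{\fp}(t-3)=3k_{\fp}$ with $0\le k_{\fp}\le \nu_{\fp}(3)$ then forces $\nu_q(t-3)=0$ for every prime $q\ne 3$ and $\nu_3(t-3)\in\{0,3\}$, so $t-3\in\{\pm 1,\pm 27\}$, giving at most $t\in\{2,4,-24,30\}$. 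Analogously for $p=5$ one obtains $t-1\in\{\pm 1,\pm 5\}$, i.e. at most $t\in\{0,2,-4,6\}$. In particular this gives finiteness immediately.

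Next I would rule out the equal-$j$ cases. By Theorem~\ref{Thm:equaljinv_0_1728} there are no $p^2$-isogenous non-isomorphic curves with both $j$-invariants equal to $0$ or to $1728$, so these contribute nothing. By Theorem~\ref{Thm:equaljinv} the remaining equal-$j$ discriminant ideal twins require $t_0$ to be a root of $t^2-6t-18$ (for $p=3$) or $t^2-2t-4$ (for $p=5$); both polynomials are irreducible over $\mathbb{Q}$, so these cases cannot be realised over $\mathbb{Q}$ either. Hence the only $p^2$-isogenous discriminant ideal twins over $\mathbb{Q}$ come from the finite list of $t$ values produced above, combined with the parameterised families $\mathcal{C}_{p^2,i}(t,d)$ from Table~\ref{ta:curves}.

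To finish, for each such $t$ I would set $d=1$ (any representative in each class of $\mathbb{Q}^\times/(\mathbb{Q}^\times)^2$ gives a curve in the same twist family), compute $\mathcal{C}_{p^2,1}(t,1)$ and $\mathcal{C}_{p^2,3}(t,1)$ explicitly, check via Theorems~\ref{n=9_onlyif} and~\ref{n=25_onlyif} that the resulting pair is indeed a discriminant ideal twin over $\mathbb{Q}$, and then read off the $j$-invariants, minimal discriminants, and LMFDB labels listed in Table~\ref{ta:clasoverQ}. Verifying that the additional condition $(t-3)^8\in\mathbb{Z}^{12}$ in Theorem~\ref{n=9_iff} holds (it does, since $(t-3)^8\in\{1,3^{24}\}$ for all four candidates) upgrades these to genuine discriminant twins in the $p=3$ case.

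The only real bookkeeping step — and the main (mild) obstacle — is recognising that the map $t\mapsto (j_1(t),j_3(t))$ is not injective, so several of the eight candidate $t$ values will collapse onto the same pair of $j$-invariants (consistent with the four entries shown in Table~\ref{QQdisctwins}); after deduplicating twist families by $j$-invariant pair, one arrives at exactly the list in Table~\ref{ta:clasoverQ}. All of these reductions are a finite computation, most easily carried out in \textsc{SageMath} as in~\cite{GitHubDIT}.
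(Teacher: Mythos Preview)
Your proposal is correct and follows essentially the same route as the paper: specialize Theorems~\ref{n=9_iff} and~\ref{n=25_iff} to $\mathbb{Q}$, use that the only unit multiples of $3^{3k}$ (resp.\ $5^k$) in $\mathbb{Z}$ with $k\in\{0,1\}$ are $\pm 1,\pm 27$ (resp.\ $\pm 1,\pm 5$), and deduplicate the resulting $t$-values by $j$-invariant pair. If anything, you are slightly more careful than the paper's own proof in explicitly disposing of the equal-$j$ case via Theorem~\ref{Thm:equaljinv} and the irreducibility of $t^2-6t-18$ and $t^2-2t-4$ over $\mathbb{Q}$; the paper only invokes Theorem~\ref{Thm:equaljinv_0_1728} at that step, even though Theorems~\ref{n=9_iff} and~\ref{n=25_iff} are stated under the hypothesis $j(E_1)\ne j(E_3)$.
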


{\renewcommand*{\arraystretch}{1.2}
\renewcommand{\arraycolsep}{0.4cm}
\begin{longtable}[c]{ccccccc}
\caption{$p^2$-isogenous discriminant ideal twins over $\mathbb{Q}$}\\
\hline
$p^2$ & $E_{1}$ & $E_{3}$ & $j\!\left(  E_{1}\right)  $ & $j\!\left(
E_{3}\right)  $ & $\Delta_{E_{1}}^{\text{min}}$ & $\Delta_{E_{3}}^{\text{min}}$ \\
\hline
\endfirsthead
\caption[]{\emph{continued}}\\
\hline
$p^2$ & $E_{1}$ & $E_{3}$ & $j\!\left(  E_{1}\right)  $ & $j\!\left(
E_{3}\right)  $ & $\Delta_{E_{1}}^{\text{min}}$ & $\Delta_{E_{3}}^{\text{min}}$\\
\hline
\endhead
\hline
\multicolumn{6}{r}{\emph{continued on next page}}
\endfoot
\hline
\endlastfoot

$9$ & $\href{http://www.lmfdb.org/EllipticCurve/Q/37.b1/}{37.b1}$ & $\href{http://www.lmfdb.org/EllipticCurve/Q/37.b2/}{37.b2}$ & $\frac{727057727488000}{37}$ & $\frac{4096000}{37}$ & $37$ & $37$\\\cmidrule(lr){2-7}
& $\href{http://www.lmfdb.org/EllipticCurve/Q/171.b1/}{171.b1}$ & $\href{http://www.lmfdb.org/EllipticCurve/Q/171.b3/}{171.b3}$ & $\frac{-50357871050752}{19}$ & $\frac{32768}{19}$ & $-13851$ & $-13851$\\\hline
$25$ & $\href{http://www.lmfdb.org/EllipticCurve/Q/11.a1/}{11.a1}$ & $\href{http://www.lmfdb.org/EllipticCurve/Q/11.a3/}{11.a3}$ & $\frac{-52893159101157376}{11}$ & $\frac{-4096}{11}$ & $-11$ & $-11$\\\cmidrule(lr){2-7}
& $\href{http://www.lmfdb.org/EllipticCurve/Q/18176.e1/}{18176.e1}$ & $\href{http://www.lmfdb.org/EllipticCurve/Q/18176.e3/}{18176.e3}$ & $\frac{3922540634246430781376}{71}$ & $\frac{190705121216}{71}$ & $36352$ & $36352$
\label{ta:clasoverQ}	
\end{longtable}
}

\begin{proof}
    Suppose $E_1$ and $E_3$ are two $p^2$-isogenous elliptic curves that are discriminant ideal twins over $\mathbb{Q}$, for $p = 3, 5$. By Theorem~\ref{Thm:equaljinv_0_1728}, we can assume that the two curves have $j$-invariants not both identically $0$ or $1728$. By Lemma \ref{Lem:tisOKint}, we know that there exists $t \in \mathbb{Z}$ and $d \in \mathbb{Z}$ such that $E_i \cong \mathcal{C}_{p^2, i}(t, d)$.

    If $p = 3$, then Theorem~\ref{n=9_iff} says that $\nu_q(t-3) = 3k_q$, for $0 \leq k_q \leq \nu_q(3)$, for all rational primes $q$. This means that $t-3$ is either a unit in $\mathbb{Q}$ or a unit multiplied by $3^3$. Since the only units in $\mathbb{Q}$ are $\pm 1$, it follows that $t - 3 = \pm1$ or $t-3 = \pm 3^3$, which implies that $t = 2, 4, -24, 30$. Note that $j(\mathcal{C}_{9, 1}(\pm 1 + 3, d)) = j(\mathcal{C}_{9, 3}(\pm 27 + 3, d))$, so $\mathcal{C}_{9, 3}(\pm 27 + 3, d)$ is a twist of $\mathcal{C}_{9, 1}(\pm 1 + 3, d)$. Therefore, we only have to consider the pairs for which $t = \pm 1 + 3$. As $d$ is a twisting parameter, for $t=4$, we obtain the curves $\href{http://www.lmfdb.org/EllipticCurve/Q/37.b1/}{37.b1}$ and $\href{http://www.lmfdb.org/EllipticCurve/Q/37.b3/}{37.b3}$; for $t = 2$, we obtain the curves $\href{http://www.lmfdb.org/EllipticCurve/Q/171.b1/}{171.b1}$ and $\href{http://www.lmfdb.org/EllipticCurve/Q/171.b3/}{171.b3}$. 

    If $p = 5$, then Theorem~\ref{n=25_iff} says that $\nu_q(t-1) = k_q$, for $0 \leq k_q \leq \nu_q(5)$, for all rational primes $q$. This means that $t-3$ is either a unit in $\mathbb{Q}$ or a unit multiplied by $5$. Since the only units in $\mathbb{Q}$ are $\pm 1$, it follows that $t - 1 = \pm1$ or $t-1 = \pm 5$, which implies that $t = 0, 2, -4, 6$. Note that $j(\mathcal{C}_{25, 1}(\pm 1 + 1, d)) = j(\mathcal{C}_{25, 3}(\pm 5 + 1, d))$, so $\mathcal{C}_{25, 3}(\pm 5 + 1, d)$ is a twist of $\mathcal{C}_{25, 1}(\pm 1 + 1, d)$. Therefore, we only have to consider the pairs for which $t = \pm 1 + 1$. As $d$ is a twisting parameter, for $t=0$, we obtain the curves $\href{http://www.lmfdb.org/EllipticCurve/Q/11.a1/}{11.a1}$ and $\href{http://www.lmfdb.org/EllipticCurve/Q/11.a3/}{11.a3}$; for $t = 2$, we obtain the curves $\href{http://www.lmfdb.org/EllipticCurve/Q/18176.e1/}{18176.e1}$ and $\href{http://www.lmfdb.org/EllipticCurve/Q/18176.e3/}{18176.e3}$. 
\end{proof}

\newpage
\appendix
\section*{Appendix: Tables of valuations}
Let $E_1$ and $E_3$ be two $p^2$-isogenous elliptic curves, for $p = 3, 5$. It follows from the work of Barrios \cite{Bariso} that there exist elements $t, d \in K$ such that $E_i \cong \Ci$, where the curves $\Ci$ are defined in Table \ref{ta:curves}. For each $p = 3, 5$, let $\fp$ be a prime of $K$. Let $\delta = \nup(d)$. Moreover, let 
\[ 
\ell = \left\{
\begin{array}{cc}
    \nup(3) & \text{ if } p = 3 \\
    \nup(5) & \text{ if } p = 5 \\
\end{array}
\right. 
\]
and
\begin{align*}
s &= \nup(t-3) \quad \text{ if } p = 3 \\
k &= \nup(t-1) \quad \text{ if } p = 5
\end{align*}
Note that $k = k_\fp$ in Theorems~\ref{n=25_if} and \ref{n=25_onlyif} and $s = s_\fp$ in Theorems~\ref{n=9_if} and \ref{n=9_onlyif}.
Assume that $k > 0$, $s > 0$, and $\ell > 0$ and define the following tables.
Note that in these tables, the values $\delta_i$ (for $1\leq i \leq 8$) are integers with the property that $\delta_i \geq 0$, for all $i$. 

\begin{center}
\begin{longtable}{c c c c c c}
\caption{Cases for $n=9$.} \label{Casesn=9}\\
        \hline 
        Poly. & $s < \ell/3$ & $s=\ell/3$ & $\ell/3 < s < \ell/2$ & $s = \ell/2$ & $\ell/2 < s < \ell$  \\
		\hline
		\endfirsthead
		\caption{Cases for $n=9$} \\
		\endhead
		\hline
		\multicolumn{4}{r}{\emph{continued on next page}}
		\endfoot
		\hline
		\endlastfoot	
        \hline 
        $T_{1}$ & $s$ & $s$ & $s$ & $s$ & $s$ \\ \cmidrule(lr){2-6}
        $T_{2}$ & $s$ & $s$ & $s$ & $s$ & $s$ \\ \cmidrule(lr){2-6}
        $T_{3}$ & $2s$ & $2s$ & $2s$ & $2s$ & $2s$ \\ \cmidrule(lr){2-6}
        $T_{4}$ & $3s$ & $3s + \delta_4$ & $\ell$ & $\ell$ & $\ell$ \\ \cmidrule(lr){2-6}
        $T_{5}$ & $3s$ & $3s$ & $3s$ & $3s$ & $3s$ \\ \cmidrule(lr){2-6}
        $T_{6}$ & $6s$ & $6s$ & $6s$ & $3\ell + \delta_6$ & $3\ell$ \\ \cmidrule(lr){2-6}
        $T_{7}$ & $6s$ & $6s$ & $6s$ & $6s$ & $6s$ \\ \cmidrule(lr){2-6}
        $T_{8}$ & $s$ & $s$ & $s$ & $s$ & $s$ \\
        \cmidrule[\heavyrulewidth](lr){1-6}
        $c_{4,1}$ & $2\ell + 4s + 2\delta$ & $10s + 2\delta$ & $2\ell + 4s + 2\delta$ & $8s + 2\delta$ & $2\ell + 4s + 2\delta$ \\ \cmidrule(lr){2-6}
        $c_{6,1}$ & $3\ell + 6s + 3\delta$ & $15s + 3\delta$ & $3\ell + 6s + 3\delta$ & $12s + 3\delta$ & $3\ell + 6s + 3\delta$ \\ \cmidrule(lr){2-6}
        $\Delta_1$ & $6\ell + 11s + 6\delta$ & $29s + 6\delta$ & $6\ell + 11s + 6\delta$ & $23s + 6\delta$ & $6\ell + 11s + 6\delta$ \\ \cmidrule(lr){2-6}
        $j_1$ & $s$ & $s$ & $s$ & $s$ & $s$ \\
        \cmidrule[\heavyrulewidth](lr){1-6}
        $c_{4,3}$ & $2\ell + 4s + 2\delta$ & $10s + \delta_4 + 2\delta$ & $3\ell +s + 2\delta$ & $7s + 2\delta$ & $3\ell + s + 2\delta$ \\ \cmidrule(lr){2-6}
        $c_{6,3}$ & $3\ell + 6s + 3\delta$ & $15s + 3\delta$ & $3\ell +6s + 3\delta$ & $12s + \delta_6 + 3\delta$ & $6\ell + 3\delta$ \\ \cmidrule(lr){2-6}
        $\Delta_3$ & $6\ell + 3s + 6\delta$ & $21s + 6\delta$ & $6\ell + 3s + 6\delta$ & $15s + 6\delta$ & $6\ell + 3s + 6\delta$ \\ \cmidrule(lr){2-6}
        $j_3$ & $9s$ & $9s + 3\delta_4$ & $3\ell$ & $6s$ & $3\ell$ \\
        \hline
        \hline
        \newpage
        Poly. & $\ell = s$ & $\ell < s < 3\ell/2$ & $s=3\ell/2$ & $3\ell/2 < s < 2\ell$ & $s = 2\ell$  \\
        \hline 

        \hline
        $T_{1}$ & $s$ & $s$ & $s$ & $s$ & $s$  \\ \cmidrule(lr){2-6}
        $T_{2}$ & $s$ & $s$ & $s$ & $s$ & $2\ell + \delta_2$  \\ \cmidrule(lr){2-6}
        $T_{3}$ & $2s$ & $2s$ & $3\ell + \delta_3$ & $3\ell$ & $3\ell$  \\ \cmidrule(lr){2-6}
        $T_{4}$ & $\ell$ & $\ell$ & $\ell$ & $\ell$ & $\ell$  \\ \cmidrule(lr){2-6}
        $T_{5}$ & $3s$ & $3s$ & $3s$ & $3s$ & $3s$  \\ \cmidrule(lr){2-6}
        $T_{6}$ & $3\ell$ & $3\ell$ & $3\ell$ & $3\ell$ & $3\ell$  \\ \cmidrule(lr){2-6}
        $T_{7}$ & $6s$ & $6s$ & $6s$ & $6s$ & $6s$  \\ \cmidrule(lr){2-6}
        $T_{8}$ & $\ell + \delta_8$ & $\ell$ & $\ell$ & $\ell$ & $\ell$  \\
        \cmidrule[\heavyrulewidth](lr){1-6}
        $c_{4,1}$ & $6s + 2\delta$ & $2\ell + 4k + 2\delta$ & $8\ell + 2\delta$ & $2\ell + 4s + 2\delta$ & $10\ell+\delta_2 + 2\delta$  \\ \cmidrule(lr){2-6}
        $c_{6,1}$ & $9s + 3\delta$ & $3\ell + 6s + 3\delta$ & $12\ell+3\delta$ & $3\ell + 6s+3\delta$ & $15\ell+3\delta$  \\ \cmidrule(lr){2-6}
        $\Delta_1$ & $17s +6\delta$ & $6\ell + 11s+6\delta$ & $9\ell + 9s + \delta_3+6\delta$ & $9\ell + 9s+6\delta$ & $27\ell+6\delta$  \\ \cmidrule(lr){2-6}
        $j_1$ & $s$ & $s$ & $s-\delta_3$ & $3s-3\ell$ & $3\ell + 3\delta_2$  \\
        \cmidrule[\heavyrulewidth](lr){1-6}
        $c_{4,3}$ & $4s + \delta_8 + 2\delta$ & $4\ell+2\delta$ & $4\ell+2\delta$ & $4\ell+2\delta$ & $4\ell+2\delta$  \\ \cmidrule(lr){2-6}
        $c_{6,3}$ & $6s+3\delta$ & $6\ell+3\delta$ & $6\ell+3\delta$ & $6\ell+3\delta$ & $6\ell+3\delta$  \\ \cmidrule(lr){2-6}
        $\Delta_3$ & $9s+6\delta$ & $6\ell + 3s+6\delta$ & $9\ell + s + \delta_3+6\delta$ & $9\ell + s+6\delta$ & $11\ell+6\delta$  \\ \cmidrule(lr){2-6}
        $j_3$ & $3s + 3\delta_8$ & $6\ell - 3s$ & $s-\delta_3$ & $3\ell - s$ & $\ell$  \\
        \hline
        \hline

        Poly. & $2\ell < s < 5\ell/2$ & $s = 5\ell/2$ & $5\ell/2 < s < 8\ell/3$ & $s = 8\ell/3$ & $8\ell/3 < s$ \\
        \hline 
        \hline 
        $T_{1}$ & $s$ & $s$ & $s$ & $s$ & $s$  \\ \cmidrule(lr){2-6}
        $T_{2}$ & $2\ell$ & $2\ell$ & $2\ell$ & $2\ell$ & $2\ell$  \\ \cmidrule(lr){2-6}
        $T_{3}$ & $3\ell$ & $3\ell$ & $3\ell$ & $3\ell$ & $3\ell$  \\ \cmidrule(lr){2-6}
        $T_{4}$ & $\ell$ & $\ell$ & $\ell$ & $\ell$ & $\ell$ \\ \cmidrule(lr){2-6}
        $T_{5}$ & $3s$ & $3s$ & $3s$ & $8\ell + \delta_5$ & $8\ell$  \\ \cmidrule(lr){2-6}
        $T_{6}$ & $3\ell$ & $3\ell$ & $3\ell$ & $3\ell$ & $3\ell$  \\ \cmidrule(lr){2-6}
        $T_{7}$ & $6s$ & $15\ell + \delta_7$ & $15\ell$ & $15\ell$ & $15\ell$  \\ \cmidrule(lr){2-6}
        $T_{8}$ & $\ell$ & $\ell$ & $\ell$ & $\ell$ & $\ell$  \\
        \cmidrule[\heavyrulewidth](lr){1-6}
        $c_{4,1}$ & $4\ell + 3s+2\delta$ & $4\ell + 3s+2\delta$ & $4\ell + 3s+2\delta$ & $12\ell + \delta_5+2\delta$ & $12\ell+2\delta$  \\ \cmidrule(lr){2-6}
        $c_{6,1}$ & $3\ell + 6s+3\delta$ & $18\ell + \delta_7+3\delta$ & $18\ell+3\delta$ & $18\ell+3\delta$ & $18\ell+3\delta$  \\ \cmidrule(lr){2-6}
        $\Delta_1$ & $9\ell + 9s+6\delta$ & $9\ell + 9s+6\delta$ & $9\ell + 9s+6\delta$ & $9\ell + 9s+6\delta$ & $9\ell + 9s+6\delta$  \\ \cmidrule(lr){2-6}
        $j_1$ & $3\ell$ & $3\ell$ & $3\ell$ & $3\ell + 3\delta_5$ & $27\ell - 9s$  \\
        \cmidrule[\heavyrulewidth](lr){1-6}
        $c_{4,3}$ & $4\ell+2\delta$ & $4\ell+2\delta$  & $4\ell+2\delta$ & $4\ell+2\delta$ & $4\ell+2\delta$  \\ \cmidrule(lr){2-6}
        $c_{6,3}$ & $6\ell+3\delta$ & $6\ell+3\delta$ & $6\ell+3\delta$ & $6\ell+3\delta$ & $6\ell+3\delta$  \\ \cmidrule(lr){2-6}
        $\Delta_3$ & $9\ell + s+6\delta$ & $9\ell + s+6\delta$ & $9\ell + s+6\delta$ & $9\ell + s+6\delta$ & $9\ell + s+6\delta$  \\ \cmidrule(lr){2-6}
        $j_3$ & $3\ell - s$ & $3\ell - s$ & $3\ell - s$ & $3\ell - s$ & $3\ell - s$ \\
        \hline
\end{longtable}
\end{center}

\begin{center}
\begin{longtable}{ C{0.2in} C{0.7in} C{0.8in} C{0.7in} C{0.7in}  C{0.8in}  C{0.6in}  C{0.6in}}
\caption{Cases for $n=25$} \label{Casesn=25}\\
    \hline 
    Poly. & $\ell < k$ & $\ell = k$ & $k < \ell < 2k$ & $\ell = 2k$ & $2k < \ell < 10k$ & $\ell = 10k$  & $10k < \ell$ \\ 
    \hline
		\endfirsthead
		\caption{Cases for $n=25$} \\
		\endhead
		\hline
		\multicolumn{7}{r}{\emph{continued on next page}}
\endfoot
		\hline
		\endlastfoot	
    \hline 
    $S_{1}$ & $2\ell$ & $2\ell$ & $2\ell$ & $2\ell + \delta_1$ & $4k$ & $4k$ & $4k$  \\ 
    \cmidrule(lr){2-8}
    $S_{2}$ & $\ell$ & $\ell$ + $\delta_2$ & $k$ & $k$ & $k$ & $k$ & $k$  \\ 
    \cmidrule(lr){2-8}
    $S_{3}$ & $k$ & $k$ & $k$ & $k$ & $k$ & $k$ & $k$  \\ 
    \cmidrule(lr){2-8}
    $S_{4}$ & $9\ell$ & $9\ell$ & $10k$ & $10k$ & $10k$ & $10k$ & $10k$  \\ 
    \cmidrule(lr){2-8}
    $S_{5}$ & $3\ell$ & $3\ell + \delta_5$ & $2k+\ell$ & $4k + \delta_5$ & $4k$ & $4k$ & $4k$  \\ 
    \cmidrule(lr){2-8}
    $S_{6}$ & $10\ell$ & $10\ell + \delta_6$ & $10k$ & $10k$ & $10k$ & $10k$ & $10k$  \\ 
    \cmidrule(lr){2-8}
    $S_{7}$ & $\ell$ & $\ell$ & $\ell$ & $\ell$ & $\ell$ & $\ell + \delta_7$ & $10 k$ \\ 
    \cmidrule(lr){2-8}
    $S_{8}$ & $\ell$ & $\ell$ & $\ell$ & $\ell + \delta_8$ & $2k$ & $2k$ & $2k$  \\ 
    \cmidrule(lr){2-8}
    $S_{9}$ & $0$ & $0$ & $0$ & $0$ & $0$ & $0$ & $0$ \\ 
    \hline
    
    \hline
    $c_{4,1}$ & $10\ell+2\delta$ & $10k + \delta_2+2\delta$ & $11k+2\delta$ & $11k+2\delta$ & $11k+2\delta$ & $11k+2\delta$ & $11k+2\delta$  \\ 
    \cmidrule(lr){2-8}
    $c_{6,1}$ & $15 \ell+3\delta$ & $15k + 2\delta_2 + \delta_5 + \delta_6+3\delta$ & $14k + \ell+3\delta$ & $16k + \delta_5+3\delta$ & $16k+3\delta$ & $16k+3\delta$ & $16k+3\delta$ \\ 
    \cmidrule(lr){2-8}
    $\Delta_1$ & $5\ell + 25k+6\delta$ & $30k + 3\delta_2+6\delta$ & $2\ell + 28k+6\delta$ & $32k + \delta_1+6\delta$ & $32k+6\delta$ & $32k+6\delta$ & $32k+6\delta$ \\ 
    \cmidrule(lr){2-8}
    $j_1$ & $25\ell - 25k$ & $0$ & $5k-2\ell$ & $k - \delta_1$ & $k$ & $k$ & $k$  \\
    \hline
    
    \hline
    $c_{4,3}$ & $2\ell+2\delta$ & $2k+\delta_2+2\delta$ & $k + \ell+2\delta$ & $3k+2\delta$ & $k + \ell+2\delta$ & $11k + \delta_7+2\delta$ & $11k+2\delta$   \\ 
    \cmidrule(lr){2-8}
    $c_{6,3}$ & $3\ell+3\delta$ & $3k + 2\delta_2+3\delta$ & $2k + \ell+3\delta$ & $4k + \delta_8+3\delta$ & $4k+3\delta$ & $4k+3\delta$ & $4k+3\delta$ \\ 
    \cmidrule(lr){2-8}
    $\Delta_3$ & $5\ell + k+6\delta$ & $6k + 3\delta_2+6\delta$ & $2\ell + 4k+6\delta$ & $8k + \delta_1+6\delta$ & $8k+6\delta$ & $8k+6\delta$ & $8k+6\delta$  \\ 
    \cmidrule(lr){2-8}
    $j_3$ & $\ell - k$ & $0$ & $\ell - k$ & $k - \delta_1$ & $3\ell - 5k$ & $25k + 3\delta_7$ & $25k$  \\
    \hline
\end{longtable}
\end{center}

\bibliographystyle{amsplain}
\bibliography{Win6}
\end{document}